\pdfoutput=1
\documentclass{amsart}
\usepackage[ascii]{inputenc}
\usepackage{amssymb,comment}
\usepackage[dvipsnames]{xcolor} 
\usepackage{graphicx,enumerate} 
\usepackage[all]{xy} 
\usepackage{mathtools} 
\mathtoolsset{centercolon} 
\xyoption{rotate}
\xyoption{pdf}
\newtheorem{theorem}[equation]{Theorem}
\newtheorem{lemma}[equation]{Lemma}
\newtheorem{cor}[equation]{Corollary}
\newtheorem{question}{Question}
\theoremstyle{definition}
\newtheorem{definition}[equation]{Definition}
\newtheorem{remark}[equation]{Remark}
\numberwithin{equation}{section}
\hyphenation{reals}

\usepackage[hidelinks]{hyperref}

\DeclareMathOperator{\cf}{cof}

\DeclareMathOperator{\comp}{comp}
\newcommand{\COB}{\mathsf{COB}}
\newcommand{\LCU}{\mathsf{LCU}}
\newcommand{\DOM}{\mathsf{DOM}}
\DeclareMathOperator{\add}{add}
\DeclareMathOperator{\cov}{cov}
\DeclareMathOperator{\non}{non}
\DeclareMathOperator{\cof}{cof}

\newcommand{\Null}{\mathcal N}
\newcommand{\Meager}{\mathcal M}
\newcommand{\addN}{{\ensuremath{\add(\Null)}}}
\newcommand{\cofN}{{\ensuremath{\cof(\Null)}}}
\newcommand{\covN}{{\ensuremath{\cov(\Null)}}}
\newcommand{\nonN}{{\ensuremath{\non(\Null)}}}
\newcommand{\addM}{{\ensuremath{\add(\Meager)}}}
\newcommand{\cofM}{{\ensuremath{\cof(\Meager)}}}
\newcommand{\covM}{{\ensuremath{\cov(\Meager)}}}
\newcommand{\nonM}{{\ensuremath{\non(\Meager)}}}

\newcommand{\cfrak}{\mathfrak{c}}
\newcommand{\bfrak}{\mathfrak{b}}
\newcommand{\dfrak}{\mathfrak{d}}

\newcommand{\hfrak}{\mathfrak{h}}

\newcommand{\mfrak}{\mathfrak{m}}
\newcommand{\pfrak}{\mathfrak{p}}

\newcommand{\innitialmark}[1]{{#1}^\text{pre}}
\newcommand{\finalmark}[1]{{#1}^\text{fin}}
\newcommand{\Ppre}{\innitialmark{P}}
\newcommand{\Pfin}{\finalmark{P}}
\newcommand{\addNi}{\innitialmark{\addN}}
\newcommand{\covNi}{\innitialmark{\covN}}

\newcommand{\nonMi}{\innitialmark{\nonM}}

\newcommand{\bfraki}{\innitialmark{\bfrak}}

\newcommand{\cfraki}{\innitialmark{\cfrak}}
\newcommand{\addNf}{\finalmark{\addN}}
\newcommand{\covNf}{\finalmark{\covN}}
\newcommand{\nonNf}{\finalmark{\nonN}}
\newcommand{\cofNf}{\finalmark{\cofN}}

\newcommand{\covMf}{\finalmark{\covM}}
\newcommand{\nonMf}{\finalmark{\nonM}}

\newcommand{\bfrakf}{\finalmark{\bfrak}}
\newcommand{\dfrakf}{\finalmark{\dfrak}}
\newcommand{\cfrakf}{\finalmark{\cfrak}}

\AtBeginDocument{%
   \def\MR#1{}
}

\title{Cicho\'n's maximum without large cardinals}

\author{Martin Goldstern}
\address{Institut f\"ur Diskrete Mathematik und Geometrie, TU Wien, 1040 Vienna, Austria.}
\email{martin.goldstern@tuwien.ac.at}
\urladdr{http://www.tuwien.ac.at/goldstern/}

\author{Jakob Kellner}
\address{Institut f\"ur Diskrete Mathematik und Geometrie, TU Wien, 1040 Vienna, Austria.}
\email{kellner@fsmat.at}
\urladdr{http://dmg.tuwien.ac.at/kellner/}

\author{Diego A. Mej\'{i}a}
\address{Creative Science Course (Mathematics), Faculty of Science, Shizuoka University, Ohya 836, Suruga-ku, Shizuoka-shi, Japan 422-8529.}
\email{diego.mejia@shizuoka.ac.jp}
\urladdr{http://www.researchgate.com/profile/Diego\_Mejia2}

\author{Saharon Shelah}
\address{Einstein Institute of Mathematics, The Hebrew University of Jerusalem, Jerusalem, 91904, Israel, and Department of Mathematics, Rutgers University, New Brunswick, NJ 08854, USA.}
\email{shlhetal@math.huji.ac.il}
\urladdr{http://shelah.logic.at}

\thanks{This work was supported by the following grants:
Austrian Science Fund (FWF): project number I3081, P29575
(first author) and P26737, P30666
(second author);
Grant-in-Aid for Early Career Scientists 18K13448, Japan Society for the Promotion of Science (third author); European Research Council grant 338821 (fourth author). This is publication number 1177 of the fourth author.}

\subjclass[2010]{03E17, 03E35, 03E40}
\date{April 16, 2020}
\begin{document}
\begin{abstract}
    Cicho\'n's diagram lists twelve
    cardinal characteristics (and the provable inequalities between them) associated with the 
    ideals of null sets,
    meager sets, countable sets, and $\sigma$-compact subsets of the irrationals.
    
    It is consistent that all 
    entries of Cicho\'n's diagram are pairwise different
    (apart from $\addM$ and $\cofM$, which are provably 
    equal to other entries).
    However, the consistency proofs 
    so far required large cardinal assumptions.

    In this work, we show the consistency without
    such assumptions.
\end{abstract}

\maketitle

\section*{Introduction}
How many Lebesgue null sets do we need to
cover the real line?
Countably many
are not enough,
as the countable union of null sets is null; 
and continuum many are enough,
as $\bigcup_{r\in\mathbb{R}}\{r\}=\mathbb{R}$.

The answer to this question (and similar ones) is called a
\emph{cardinal characteristic} (sometimes also called cardinal invariant); 
in our case the
characteristic is called ``$\covN$''.

As we have argued, $\aleph_0<\covN\le 2^{\aleph_0}$.
So
if the Continuum Hypothesis (CH) holds, then
$\covN=2^{\aleph_0}$.
It has been shown by G\"odel~\cite{goedel} and Cohen~\cite{MR0157890} that
CH is independent of ZFC. I.e., one can prove: If ZFC is consistent,
then so is ZFC$+$CH as well as ZFC$+\lnot$CH.

Under $\lnot$CH, $\covN$ could be some cardinal
less than $2^{\aleph_0}$, and one can indeed show that
$\aleph_1=\covN=2^{\aleph_0}$,
$\aleph_1<\covN=2^{\aleph_0}$ and
$\aleph_1=\covN<2^{\aleph_0}$ are all consistent.

Some more characteristics associated with the $\sigma$-ideal $\mathcal N$ of null sets are defined:
\begin{itemize}
\item
$\addN$ is the smallest number of null sets whose
union is not null.
\item $\nonN$ is the smallest cardinality
of a non-null set.
\item $\cofN$
is the smallest size of a cofinal family of null sets,
i.e., a family that contains for each null set $N$
a superset of $N$.
\end{itemize}
Replacing $\mathcal N$
with another $\sigma$-ideal $I$ gives us the analogously
defined characteristics for $I$.
In particular, for the meager ideal $\mathcal M$ we get
$\addM$, $\nonM$, $\covM$, $\cofM$.

For the $\sigma$-ideal $\texttt{ctbl}$ of countable sets, it is easy to see that 
$\add(\texttt{ctbl})=\non(\texttt{ctbl})=\aleph_1$ and $\cov(\texttt{ctbl})=\cof(\texttt{ctbl})=2^{\aleph_0}$, which is 
also called $\cfrak$ (for ``continuum'').

For $\mathcal K$, the $\sigma$-ideal generated by the
compact subsets of the irrationals, it turns out that
$\add(\mathcal K)=\non(\mathcal K)$. This characteristic is more commonly called
$\bfrak$. We also have 
$\cov(\mathcal K)=\cof(\mathcal K)$, called $\dfrak$.

These characteristics are customarily displayed
in Cicho\'n's  diagram, see Figure~\ref{fig:cichon}.
\newcommand{\mye}{*+[F.]{\phantom{\lambda}}}
\begin{figure}
\resizebox{\textwidth}{!}{$
\xymatrix@=2ex{
&
&
&\cfrak
\\
\covN\ar[r]
&\nonM\ar[r]
&\cofM\ar[r]
&\cofN\ar[u]
\\
&
\mathfrak b\ar[r]\ar[u]
&\mathfrak d\ar[u]
\\
\addN\ar[r]\ar[uu]
&\addM\ar[r]\ar[u]
&\covM\ar[r]\ar[u]
&\nonN\ar[uu]
\\
\aleph_1\ar[u] 
}\quad
\xymatrix@=2ex{
&
&
&\cfrak
\\
\covN\ar[r]
&\nonM\ar[r]
&\mye\ar[r]
&\cofN\ar[u]
\\
&
\mathfrak b\ar[r]\ar[u]
&\mathfrak d\ar[u]
\\
\addN\ar[r]\ar[uu]
&\mye\ar[r]\ar[u]
&\covM\ar[r]\ar[u]
&\nonN\ar[uu]
\\
\aleph_1\ar[u] 
}
$}
    \caption{\label{fig:cichon}Cicho\'n's diagram (left). In the version on the right, the
two ``dependent'' values are removed; the ``independent'' ones remain (nine entries excluding $\aleph_1$, or ten including it).
It is consistent that these ten entries are pairwise different.}
\end{figure}
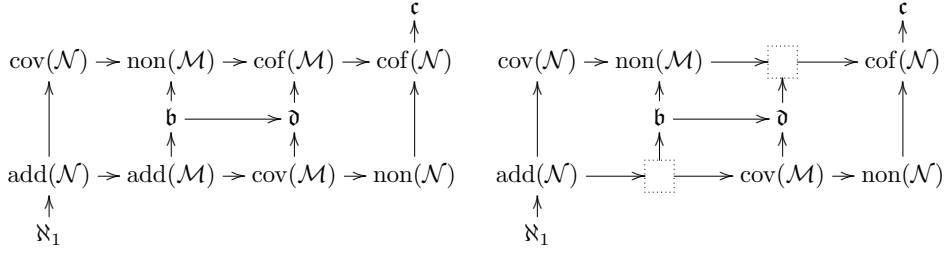
An arrow from $\mathfrak x$ to $\mathfrak y$ indicates that ZFC proves
$\mathfrak x\le \mathfrak y$.
Moreover, one can show that $\max\{\mathfrak d,\nonM\}=\cofM$ and $\min\{\mathfrak b,\covM\}=\addM$.
A series of results~\cite{MR719666,MR781072,MR1233917,MR1071305,MR1022984,MR613787,MR735576,MR697963,MR800191},
summarized in~\cite[Ch.~7]{BJ},
proves these (in)equalities in ZFC and shows that they are the only ones
provable. More precisely, all assignments of the values $\aleph_1$ and $\aleph_2$
to the nine ``independent'' characteristics in Cicho\'n's diagram (excluding $\aleph_1$ and
including $\cfrak$) 
are consistent with ZFC, provided they honor the inequalities given by the arrows.

This leaves the question on how to separate more than two entries simultaneously. There was a lot of progress
in recent years, giving four and up to seven values~\cite{MR3047455,five,MR3513558,MR3796283,mejiavert}.
Finally, it was shown~\cite{GKS} that the following statement, which we call \textbf{``Cicho\'n's maximum''}, is consistent:
\begin{quote}
The maximal possible number of  entries of Cicho\'n's diagram, i.e., all ten ``independent'' entries (including $\aleph_1$ and $\cfrak$), are pairwise different.
\end{quote}
However, the proof required four Boolean ultrapower embeddings, constructed from four strongly compact cardinals.\footnote{
A simpler example of this Boolean ultrapower construction,  giving only
eight different values and using three compacts, can be found in~\cite{KTT}; and later a
construction for Cicho\'n's maximum requiring only three
compacts was given 
in~\cite{diegoetal}.
\cite{moti2} notes that superstrongs are sufficient
for the constructions. However until now all proofs 
showing the consistency of eight or more
different values needed some 
large cardinals assumptions.}
A strongly compact cardinal is an example of a so-called ``large cardinal'' (LC). Such cardinals turned out to be an important 
scale for measuring consistency strengths of mathematical (and in particular set theoretic) statements:
There are 
many examples of statements $\varphi$
where one cannot prove
\begin{quote}
The consistency of ZFC implies the consistency of (ZFC plus $\varphi$),
\end{quote}
but only:
\begin{quote}
The consistency of (ZFC plus LC) implies the consistency of (ZFC plus $\varphi$)
\end{quote}
for some specific large cardinal axiom LC.
In many cases, one can even show that $\varphi$ is equiconsistent to LC (i.e., one can also prove that the consistency of (ZFC plus $\varphi$) implies the consistency of (ZFC plus LC)). For example, ``there is an extension of Lebesgue measure to a $\sigma$-complete measure which measures \emph{all} sets of reals'' is equiconsistent with a so-called measurable cardinal (a notion much weaker than a strongly compact). 

In case of $\varphi$ being Cicho\'n's maximum, we
previously could only prove an upper bound for 
the consistency strength, but conjectured that Cicho\'n's maximum is
actually  equiconsistent with ZFC. This turns out to be correct.


In this work, we introduce a new method to control cardinal characteristics when modifying a finite support ccc iteration (by taking intersections with $\sigma$-closed elementary submodels). 
This method can replace the Boolean ultrapower embeddings in previous constructions, 
so in particular we can get Cicho\'n's maximum without assuming large cardinals. Furthermore, we can get arbitrary regular cardinals as the values of the entries 
in Cicho\'n's diagram. As the method is quite general, we expect that it can be applied to
control the values of other characteristics, in other constructions, as well.  

This paper should be reasonably self-contained
(modulo an understanding of forcing, such as
presented in~\cite{Kunen}).
However,
in Section~\ref{sec:prep}  we just 
quote the result (from \cite{GKS} or alternatively from \cite{diegoetal})
that a suitable preparatory forcing $\Ppre$
for the left hand side exists, without proofs or much explanation.

\bigskip

Annotated contents:
\begin{itemize}
    \item[S.~\ref{sec:cob}]
    We define the properties $\LCU$ and $\COB$  for a forcing
    $P$, which give us
    the ``strong witnesses'' that will guarantee
    the desired equalities (or rather: both sides of the required inequalities) for the respective cardinal characteristics.
    We show how these properties are preserved when intersecting $P$ with a $\sigma$-complete elementary submodel.
    \item[S.~\ref{sec:prep}]
    We just quote (without proof) the result 
    from~\cite{GKS} (or~\cite{diegoetal})
    that a suitable forcing $\Ppre$ for the left hand side 
    with suitable $\LCU$ and $\COB$ properties exists.
    \item[S.~\ref{sec:new}] 
    We prove the main result: There is a complete subforcing $\Pfin$ of $\Ppre$ which forces ten different values to Cicho\'n's diagram (we can actually choose
    any desired regular values).
    \item[S.~\ref{sec:more}] We remark that the same argument can
    be applied to alternative ``initial forcings'' for the left hand side.
    In particular, using a construction of~\cite{KeShTa:1131}, we get  
    another ordering of the ten entries in Cicho\'n's diagram.
    \item[S.~\ref{sec:q}] We list some open questions regarding alternative orders of Cicho\'n's diagram with ten values.
\end{itemize}

\section{The \texorpdfstring{$\LCU$}{LCU} and \texorpdfstring{$\COB$}{COB} properties and  \texorpdfstring{$\sigma$}{sigma}-closed
elementary submodels}\label{sec:cob}

Let $R$ be a binary relation on some basic set $Y$.
The cardinal
$\bfrak_R$, the \emph{bounding number}  of $R$, is the minimal size of an unbounded family. I.e.,
\[
\bfrak_R:= \min\bigl\{|B|:\, B \subseteq Y,\ ( \forall g\in Y)\, (\exists f\in B)\, \lnot (f R g)\bigr\},
\]
Dually, $\dfrak_R$, the  \emph{dominating number} of $R$, is the minimal size of a dominating family. I.e.,
\[\dfrak_R:= \min\bigl\{|D|:\, D \subseteq Y,\ ( \forall f\in Y)\, (\exists g\in D)\, f R g\bigr\}.\]

We will use these notions in two situations:

On the one hand, $R$ may be a directed partial order (or a linear order) without largest element, such as $([X]^{<\kappa}, \subseteq)$ or $(\kappa,\in)$.
Then we will call $\bfrak_R$ the \emph{completeness of $R$} and denote it by $\comp(R)$;
and we call $\dfrak_R$ the \emph{cofinality of $R$} and denote it by $\cf(R)$. 
Note that $R$ is
${<}\lambda$-directed iff $\lambda\le\comp(R)$
(as we assume that $R$ is directed).
If in addition $R$ is linear without a maximal element, then $\cf(R)=\comp(R)$ is an infinite regular cardinal. 
%

On the other hand, $R$ may be a  (possibly non-transitive) Borel relation on the reals 
(more generally: a sufficiently absolute definition of a binary relation 
on the reals), and we get the cardinal characteristics of the continuum
$\bfrak_R$ and $\dfrak_R$.
Note that $(\bfrak_R, \dfrak_R)=(\dfrak_{R^\perp},
\bfrak_{R^\perp})$, where we define the dual relation $R^\perp$
by $x R^\perp y$
iff $\lnot(y R x)$.
All entries of Cicho\'n's diagram are of this form, for quite natural 
relations~$R$. (For more details, see the references after Theorem~\ref{thm:left}.)

In the following we give definitions of $\LCU$ and
$\COB$ which are notational variants\footnote{There are other variants of these definitions
that do not mention forcings (\cite[Def.~2.11]{1166}) but are applied to the extension $V[G]$. These variants are basically equivalent.}
of the definitions given in~\cite[Def.~1.8 \&~1.15]{GKS}.

We investigate relations on the reals, and fix  $\omega^\omega$
as representation of the reals.
(This choice is irrelevant, and we could use any of the other usual representations as well. We just pick one
so that we can later refer to the reals as a well defined object,
and so that we can e.g. use $(\forall x\in\omega^\omega)$ in formulas.)

\begin{definition}\label{def:cob}
Assume $R$ is a binary relation on $\omega^\omega$ which
is Borel, or just sufficiently absolutely defined.\footnote{The discussion after~\eqref{eq:absolute} shows which amount of absoluteness is sufficient for us. We will need non-Borel relations only in Subsection~\ref{ss:other}.}
\begin{itemize}
    \item For a directed partial order $(S, \le_S) $ without maximal elements, the ``cone of bounds'' property $\COB(P,S)$ says:
    There is a sequence\footnote{If $S$ is a (partially) ordered set, we sometimes use ``a sequence indexed by $S$'' as synonym for ``a function with domain $S$''.} $(g_s)_{s\in S}$ of $P$-names of reals such that
    for any $P$-name $f$ of a real there is an $s\in S$ such that \[P\Vdash (\forall t\ge_S s)\, f R g_t.\]

    \item For a linear order $L$ without largest element, the ``linear cofinal unbounded''
    property $\LCU_R(P,L)$ is defined as:

    There is a sequence $(c_\alpha)_{\alpha\in L}$ of $P$-names  of reals
    such that for each $P$-name $g$ of a real there is an $\alpha_0\in L$ such that
    \[ P\Vdash (\forall \alpha\ge_L \alpha_0)\,\lnot (c_\alpha R g).\]
\end{itemize}
\end{definition}
(When writing $P\Vdash fRg$, we of course mean
that we evaluate the definition of $R$ in
the extension.)

Actually, $\LCU$ is a special case of $\COB$:
\begin{equation}\label{eq:LCUisCOB}
\LCU_R(P, \kappa)\text{ is equivalent to }\COB_{R^\perp}(P, \kappa)
\end{equation}
(again, $R^\perp$ denotes the dual of $R$).
However, $\LCU$ and $\COB$  will play different roles in our arguments, so we prefer to have different notations for these two concepts.

The following is basically the same as~\cite[Lem.~1.9 \&~1.16]{GKS} (see also~\cite[Fact~2.14]{1166}):
\begin{lemma}\label{lem:blubb}
\begin{enumerate}
\item 
Let $S$ be a ${<}\lambda$-directed partial order without a  largest element, and let  $A\subseteq S$ be cofinal.
Then $\COB_R(P,S)$ is equivalent to $\COB_R(P,A)$, and  implies
\[
  P\Vdash \bigl(\, \bfrak_R\ge\lambda \ \&\ \dfrak_R\le |A|\, \bigr).
\]
\item 
Let $L$ be linear without a  largest element and set  $\lambda:=\cf(L)$. (So $\lambda$ is an infinite regular cardinal.)
Then $\LCU_R(P,L)$ is equivalent to
$\LCU_R(P,\lambda)$, and implies\footnote{We actually do mean $\dfrak_R\ge \lambda$ and not just $\dfrak_R\ge |\lambda|$,
i.e., if $\lambda$ is not a cardinal in the extension anymore,
then we have $\dfrak_R\ge |\lambda|^+$. But this is irrelevant
in our application, as $P$ will preserve $\lambda$.}
\[
  P\Vdash \bigl(\, \bfrak_R\le|\lambda| \ \&\ \dfrak_R\ge \lambda\, \bigr).
\]
\end{enumerate}
\end{lemma}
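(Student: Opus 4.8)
The plan is to handle the two items in parallel, since each consists of (a) an equivalence between the property indexed by the given order and the one indexed by a canonical index set ($A$, resp.\ $\lambda$), and (b) two bounds on $\bfrak_R,\dfrak_R$ in the extension. For (b) the core move is used twice: given a $P$-name for a \emph{small} family of reals, pull back along the $\COB$/$\LCU$-witness the indices it provides, use the ${<}\lambda$-directedness (resp.\ the regularity of $\lambda=\cf(L)$) to absorb them under one index, and read off the conclusion at that index. The only real subtlety, discussed at the end, is the cardinal bookkeeping needed so that ``small'' families of the extension can be replaced by $P$-names indexed (in $V$) by an ordinal ${<}\lambda$.

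For item (1): First I would check that a cofinal $A\subseteq S$, with the inherited order, is directed without largest element (directedness: given $a_1,a_2$ take $s\in S$ above both, then $a\in A$ above $s$; a largest element of $A$ would be largest in $S$ by cofinality), so that $\COB_R(P,A)$ makes sense. Given a $\COB_R(P,S)$-witness $(g_s)_{s\in S}$, the restriction $(g_a)_{a\in A}$ witnesses $\COB_R(P,A)$: for a name $f$, the $s$ given by $\COB$ lies below some $a_0\in A$, and every $a\in A$ with $a\ge_S a_0$ is $\ge_S s$. Conversely, from a $\COB_R(P,A)$-witness $(g_a)_{a\in A}$, fix $h\colon S\to A$ with $h(s)\ge_S s$ and put $g'_s:=g_{h(s)}$; the same threshold works. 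For the bounds: $P\Vdash\dfrak_R\le|A|$ because for any real $f$ of $V[G]$ with name $\dot f$, $\COB$ yields $a\in A$ with $P\Vdash\dot f R g_a$, so $\{g_a[G]:a\in A\}$ is dominating. For $P\Vdash\bfrak_R\ge\lambda$, suppose in $V[G]$ there is an $R$-unbounded $B$ with $|B|<\lambda$; pick $\theta<\lambda$, a name $\dot e$ and $p\in G$ with $p\Vdash\dot e\colon\check\theta\twoheadrightarrow\dot B$, and set $\dot f_\xi:=\dot e(\check\xi)$. For each $\xi<\theta$, $\COB$ gives $s_\xi\in S$ with $P\Vdash(\forall t\ge_S s_\xi)\,\dot f_\xi R g_t$; since $|\{s_\xi:\xi<\theta\}|<\lambda$ and $S$ is ${<}\lambda$-directed, fix $s^\ast\ge_S s_\xi$ for all $\xi$, whereupon $p\Vdash$ ``$g_{s^\ast}$ is an $R$-bound of $\dot B$'', contradicting unboundedness.

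For item (2): I would fix a strictly increasing cofinal $\iota\colon\lambda\to L$ (available since $\cf(L)=\lambda$ forces every subset of $L$ of size ${<}\lambda$ to be bounded, hence strictly bounded as $L$ has no largest element). For $\LCU_R(P,L)\Rightarrow\LCU_R(P,\lambda)$ take $c'_\beta:=c_{\iota(\beta)}$; for the converse define $\pi\colon L\to\lambda$ by $\pi(\alpha):=\sup\{\beta+1:\iota(\beta)\le_L\alpha\}$, which is ${<}\lambda$-valued, monotone, and ${>}\beta_0$ whenever $\alpha\ge_L\iota(\beta_0)$, and take $c_\alpha:=c'_{\pi(\alpha)}$; the threshold transfers along $\iota$ in both directions. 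For $P\Vdash\bfrak_R\le|\lambda|$: for any real $g$ of $V[G]$ with name $\dot g$, $\LCU$ gives $\beta_0<\lambda$ with $P\Vdash\lnot(c_{\beta_0}R\dot g)$, so $\{c_\beta[G]:\beta<\lambda\}$ is $R$-unbounded. For $P\Vdash\dfrak_R\ge\lambda$: suppose in $V[G]$ there is an $R$-dominating $D$ with $|D|<\lambda$; as before take $\theta<\lambda$, a name $\dot e$ and $p\in G$ with $p\Vdash\dot e\colon\check\theta\twoheadrightarrow\dot D$, put $\dot g_\xi:=\dot e(\check\xi)$, and for each $\xi$ let $\alpha_\xi<\lambda$ be such that $P\Vdash(\forall\beta\ge\alpha_\xi)\,\lnot(c_\beta R\dot g_\xi)$. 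Since $\lambda$ is regular, $\alpha^\ast:=\sup_{\xi<\theta}\alpha_\xi<\lambda$, so $p\Vdash(\forall\xi<\theta)\,\lnot(c_{\alpha^\ast}R\dot g_\xi)$, i.e.\ $c_{\alpha^\ast}$ is not $R$-dominated by any member of $\dot D$ --- a contradiction.

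The argument is routine; the point requiring care is exactly the cardinal-arithmetic step in the two ``suppose not'' subproofs: the witnessing family for unboundedness (resp.\ non-domination) must be extracted from a $P$-name below a condition $p\in G$, and the reduction of ``$|B|<\lambda$ in $V[G]$'' to a surjection $\check\theta\twoheadrightarrow\dot B$ with names $\dot e(\check\xi)\in V$ is what makes ${<}\lambda$-directedness (resp.\ regularity of $\cf(L)$) applicable. If $P$ collapsed $\lambda$ one would only obtain $\bfrak_R\ge|\lambda|^+$, etc., but in the intended application $P$ preserves $\lambda$, so this is moot.
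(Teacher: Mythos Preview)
Your proof is correct. For part~(1) you follow essentially the same route as the paper: restrict the witness to $A$ for one direction of the equivalence, push forward along a map $S\to A$ for the other, and for $\bfrak_R\ge\lambda$ name a small family, collect the $\COB$-thresholds, and absorb them by ${<}\lambda$-directedness.

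For part~(2) your approach differs from the paper's. The paper does not repeat the argument; it simply invokes the observation $\LCU_R(P,\kappa)\iff\COB_{R^\perp}(P,\kappa)$ together with $\comp(L)=\cf(L)$ for linear $L$ and $(\bfrak_R,\dfrak_R)=(\dfrak_{R^\perp},\bfrak_{R^\perp})$, so that part~(2) falls out of part~(1) applied to $R^\perp$. Your direct argument---building explicit transfer maps $\iota\colon\lambda\to L$ and $\pi\colon L\to\lambda$ and rerunning the small-family absorption via regularity of $\lambda$---is perfectly fine and makes the mechanism visible, at the cost of some repetition. The paper's route is shorter and highlights that $\LCU$ is a special case of $\COB$; yours is self-contained and would survive even if one had not noticed the duality.
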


\begin{proof}
    Regarding the equivalence:
    Let $(g_s)_{s\in S}$ witness $\COB_R(P,S)$.
    Then $(g_s)_{s\in A}$ witnesses $\COB_R(P,A)$.
    On the other hand, if $(g'_s)_{s\in A}$ witnesses $\COB_R(P,A)$, then we assign to every $s\in S$
    some $a(s)\in A$ above $s$, and set
    $g''_s:=g'_{a(s)}$. Then $(g''_s)_{s\in S}$ witnesses $\COB_R(P,S)$.

    From now on assume that $(g_s)_{s\in A}$ witnesses $\COB_R(P,A)$.
    Regarding $\dfrak_R$, note that $\{g_s:\, s\in A\}$ is forced to be dominating.
    
    Regarding $\bfrak_R$, assume that
    $p_0$ forces that $X\subseteq \omega^\omega$
    is of size less than (the ordinal)~$\lambda$.
    Fix $p_1\leq p_0$, $\kappa<\lambda$ and $P$-names
    $(f_\alpha)_{\alpha\in\kappa}$ of reals such that
    $p_1\Vdash X=\{f_\alpha:\, \alpha\in\kappa\}$.
    For each $\alpha$ let $s_\alpha$ be an element of $S$
    satisfying the $\COB$ requirement for $f_\alpha$.
    As $S$ is ${<}\lambda$-directed, there is some $t\in S$
    above all $s_\alpha$, i.e., $P\Vdash f_\alpha R g_t$ for all $\alpha\in \kappa$. Accordingly, $p_0$ cannot force $X$ to be unbounded.

    The claims on $\LCU$ follow from the ones on $\COB$
    by~\eqref{eq:LCUisCOB} (together with the fact that
    for linear orders $L$, $\comp(L)=\cof(L)$ and
    that $(\bfrak_R,\dfrak_R)=(\dfrak_{R^\perp},\bfrak_{R^\perp})$).
\end{proof}

In the following results we show that when we restrict a poset $P$ to a $\sigma$-closed
elementary submodel $N$ of some $H(\chi)$, then 
the $\LCU$ and $\COB$ properties still hold (when we intersect the parameter with $N$ as well).
These are simple technical tools we will use to prove the main results.

Assume that $\kappa$ is regular,
$P$ $\kappa$-cc,
$N\preccurlyeq H(\chi)$ is ${<}\kappa$-closed
and $P\in N$.
Then $P\cap N$ is again $\kappa$-cc
and thus a complete subforcing of $P$.
So given a $P\cap N$-generic $G$ over $V$, there is 
a $P$-generic $G^+$ over $V$ extending $G$.
Note that $G^+$ is $P$-generic over $N$ as well, and that 
$N[G^+]\preccurlyeq H^{V[G^+]}(\chi)$.

There is a correspondence of $P\cap N$-names $\sigma$ for reals and
$P$-names $\tau\in N$ for reals,
such that $\sigma[G]=\tau[G^+]$ and for all $p\in P\cap N$ and sufficiently absolute $\varphi$,
\begin{equation}\label{eq:absolute}
    p\Vdash_P\varphi(\tau)\text{ iff }p\Vdash_{P\cap N} \varphi(\sigma).
\end{equation}
In a bit more detail:
A ``nice $Q$-name for a $\zeta$-subset'' (for an ordinal $\zeta$)
is a sequence 
$\bar{h}:=\big((h_n,A_n)\big)_{n<\zeta}$ such that $A_n$ is a maximal antichain in $Q$
and $h_n:A_n\to 2$ (evaluated in the generic extension as $\{n\in\zeta:\, (\exists a\in G_Q\cap A_n)\, h_n(a)=1\}$).
As $P\cap N\lessdot P$, every nice $P\cap N$-name $\bar h$ for a $\zeta$-subset is also a nice $P$-name, and furthermore $\bar h\in N$ whenever $\zeta<\kappa$ (as $N$ is ${<}\kappa$-closed).
On the other hand, if $\zeta<\kappa$ then every nice $P$-name $\bar h$ for a $\zeta$-subset which is in $N$ is actually a nice $P\cap N$-name.
Note that if $\varphi$ is Borel, then we are done with
showing~\eqref{eq:absolute}. For a more general formula $\varphi$,
note that we have just shown that 
$N[G^+]\cap 2^{<\kappa}=V[G]\cap 2^{<\kappa}$, and
using an absolute bijection between $2^{<\kappa}$ and $H(\kappa)$, 
we get that $N[G^+]\cap H(\kappa)=V[G]\cap H(\kappa)$.
So~\eqref{eq:absolute} holds whenever
$\varphi$ is, e.g., (provably) absolute between the universe and $H(\chi)$ (for $\chi=\kappa$
as well as for $\chi$ sufficiently large), where $\varphi$ may
use  elements of $H(\kappa)$ (or names for such elements) as parameters.

\begin{lemma}\label{lem:trivialalso}
      Assume $P$ is $\kappa$-cc for some uncountable regular $\kappa$
      and $N\preccurlyeq H(\chi)$ is ${<}\kappa$-closed.
      Then $P\cap N$ is
      a $\kappa$-cc complete subforcing of $P$. Assume in the following that
      $P$, $S$, $L$, $\kappa$, $R$ are in $N$.
      \begin{enumerate}
        \item      $\COB_R(P,S)$ implies   $\COB_R(P\cap N,S\cap N)$.

            So if we set
            $\lambda_1:=\comp(S\cap N)$ and
            $\lambda_2:=\cof(S\cap N)$, then
            \\
            $\COB_R(P,S)$ implies
            $P\cap N\Vdash \bfrak_R\ge \lambda_1
            \ \&\ \dfrak_R\le |\lambda_2|$.
          \item\label{item:lcu} $\LCU_R(P,L)$ implies
          $\LCU_R(P\cap N,L\cap N)$.

          So if we set $\lambda:=\cf(L\cap N)$, then
          \\
          $\LCU_R(P,L)$
           implies
          $P\cap N\Vdash \bfrak_R\le |\lambda|
            \ \&\ \dfrak_R\ge \lambda$.
            
      \end{enumerate}
\end{lemma}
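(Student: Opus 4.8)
The plan is to establish the two transfer implications $\COB_R(P,S)\Rightarrow\COB_R(P\cap N,S\cap N)$ and $\LCU_R(P,L)\Rightarrow\LCU_R(P\cap N,L\cap N)$ directly, and then deduce the inequalities for $\bfrak_R$ and $\dfrak_R$ by feeding the resulting $\COB$ and $\LCU$ statements into Lemma~\ref{lem:blubb} applied to the poset $P\cap N$. That $P\cap N$ is a $\kappa$-cc complete subforcing of $P$ is already recorded in the paragraph before the lemma; we will also use freely, as set up there, that (since $\omega<\kappa$) a nice $P\cap N$-name for a real is the same thing as a nice $P$-name for a real lying in $N$, that every $P\cap N$-name for a real is equivalent to a nice one, and the absoluteness statement~\eqref{eq:absolute}.

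For~(1): as $P,S,R\in N$, elementarity of $N$ applied to the (true, hence true in $H(\chi)$) statement ``$\COB_R(P,S)$'' gives a witnessing sequence $(g_s)_{s\in S}\in N$, which we may assume consists of nice $P$-names. For $s\in S\cap N$ we then have $g_s\in N$, so $g_s$ also counts as a nice $P\cap N$-name; I claim $(g_s)_{s\in S\cap N}$ witnesses $\COB_R(P\cap N,S\cap N)$. Given a $P\cap N$-name $f$ for a real, replace it by an equivalent nice one, so that $f\in N$ and $f$ is simultaneously a $P$-name. By $\COB_R(P,S)$ there is $s\in S$ with $P\Vdash(\forall t\ge_S s)\,fRg_t$; since $f$, $S$, $(g_s)_{s\in S}$, $R$ are all in $N$, elementarity provides such an $s$ inside $N$, i.e.\ $s\in S\cap N$. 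Now for every $t\in S\cap N$ with $t\ge_S s$ we have $P\Vdash fRg_t$, and hence $P\cap N\Vdash fRg_t$ by~\eqref{eq:absolute} (both $f$ and $g_t$ being nice). This proves $\COB_R(P\cap N,S\cap N)$. For the ``so'' clause, note that $S\cap N$ is again directed (a common upper bound of $a,b\in S\cap N$ exists in $S$, hence in $N$ by elementarity) with no largest element (reflecting ``$m$ is not maximal in $S$'' into $N$ shows no $m\in S\cap N$ is largest); thus Lemma~\ref{lem:blubb}(1), applied to $P\cap N$ with $\lambda:=\lambda_1=\comp(S\cap N)$ and a cofinal $A\subseteq S\cap N$ of size $\lambda_2=\cof(S\cap N)$, yields $P\cap N\Vdash\bfrak_R\ge\lambda_1\ \&\ \dfrak_R\le|\lambda_2|$.

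Part~(2) runs identically with $\LCU$ replacing $\COB$: elementarity gives a witness $(c_\alpha)_{\alpha\in L}\in N$ consisting of nice $P$-names; given a nice $P\cap N$-name $g$ for a real (so $g\in N$ and $g$ is a $P$-name), $\LCU_R(P,L)$ together with elementarity produces $\alpha_0\in L\cap N$ with $P\Vdash(\forall\alpha\ge_L\alpha_0)\,\lnot(c_\alpha Rg)$, and~\eqref{eq:absolute} transfers $\lnot(c_\alpha Rg)$ to $P\cap N$ for each $\alpha\in L\cap N$ above $\alpha_0$; so $(c_\alpha)_{\alpha\in L\cap N}$ witnesses $\LCU_R(P\cap N,L\cap N)$. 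Since $L\cap N$ is linear with no largest element, Lemma~\ref{lem:blubb}(2) with $\lambda:=\cf(L\cap N)$ supplies the remaining bounds $P\cap N\Vdash\bfrak_R\le|\lambda|\ \&\ \dfrak_R\ge\lambda$. The one step requiring care is the repeated appeal to~\eqref{eq:absolute}: it rests on being able to take all names involved nice (so that they are coded in $H(\kappa)$ and hence absorbed by the ${<}\kappa$-closed $N$) and on $R$ being absolutely enough defined that forcing statements about it are computed the same way over $P$ and over $P\cap N$. Everything else is routine bookkeeping with the elementarity of $N$ and the passage to nice names.
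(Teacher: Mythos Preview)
Your proof is correct and follows essentially the same approach as the paper: take a witnessing sequence in $N$ by elementarity, restrict it to $S\cap N$ (resp.\ $L\cap N$), convert $P\cap N$-names to $P$-names in $N$ via nice names, find the bound in $N$ by elementarity, and transfer the forcing statements via~\eqref{eq:absolute}. The only cosmetic difference is that the paper dispatches~(2) in one line by invoking the duality~\eqref{eq:LCUisCOB} (so~(2) is literally a special case of~(1)), whereas you rerun the argument for $\LCU$ directly; your added verification that $S\cap N$ is directed without a largest element is a detail the paper leaves implicit.
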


\begin{proof}    
      Let $(f_s)_{s\in S}$ witness $\COB_R(P,S)$ in $N$.
      Then $(f_s)_{s\in S\cap N}$ witnesses $\COB_R(P\cap N,S\cap N)$:
      Assume $g\in V$ is a $P\cap N$-name for a real. As above we interpret it as a $P$-name in $N$.
      So $N$ thinks there is some $s\in S$
      such that for all $t\ge_Ss$, $P \Vdash gRf_t$.
      So by absoluteness~\eqref{eq:absolute}, for every $t\ge_S s$ in $N$
      we get $P\cap N\Vdash gRf_t$.
      
      Again, (2) is a special case of (1).
\end{proof}

\begin{lemma}\label{lem:trivial}
Let $\kappa\leq\lambda\leq\theta$ be cardinals with $\kappa$ and $\lambda$ uncountable regular, $S$ a directed set without maximal elements, $\zeta$ a regular cardinal, and let $P$ be a $\kappa$-cc poset. 
Assume $(N_i)_{i<\lambda}$ is an increasing sequence of ${<}\kappa$-closed elementary submodels of $H(\chi)$,
where $\chi$ is a fixed, sufficiently large\footnote{It is enough to
assume $\theta$, $\zeta$, $S$ and $2^P$ are in $H(\chi)$.} regular cardinal.
Assume that $|N_i|=\theta$, that $\theta\cup\{\theta,P,R,S,\zeta\}\subseteq N_i$, and that $N_i\in N_{i+1}$ for any $i<\lambda$. Set $N:=\bigcup_{i<\lambda}N_i$ (which is also a ${<}\kappa$-closed elementary submodel).
\begin{enumerate}
    \item\label{item:eins} 
    $\cf(\zeta\cap N)=\zeta'$, where 
    $\zeta':=\begin{cases}\zeta&\text{if } \zeta\le\theta,\\\lambda&\text{otherwise.}\end{cases}$
    
    In particular $\LCU_R(P,\zeta)$ implies $\LCU_R(P\cap N,\zeta')$.
    \item\label{item:zwei} $\comp(S\cap N)\ge \min(\kappa,\comp(S))$.
    \item\label{item:drei} If $\cf(S)\le \theta$, then $S\cap N$ is cofinal in $S$,
    and in particular $S\cap N$ has the same cofinality and
    completeness as $S$.
    \item\label{item:zero} If $\comp(S)>\theta$, then $\cof(S\cap N)=\lambda$.
    
    In particular $\COB_R(P,S)$ implies $\COB_R(P\cap N,\lambda)$.
\end{enumerate}
\end{lemma}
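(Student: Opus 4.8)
The plan is to reduce everything to the transfer lemmas already proved (Lemmas~\ref{lem:blubb} and~\ref{lem:trivialalso}) together with a handful of elementary order computations about $\zeta\cap N$ and $S\cap N$. First I would settle the parenthetical assertions: $N$ is an elementary submodel of $H(\chi)$ by Tarski--Vaught, and it is ${<}\kappa$-closed because any $A\in[N]^{<\kappa}$ is, by regularity of $\lambda\ge\kappa$, contained in some $N_i$, hence $A\in N_i\subseteq N$; so $P\cap N\lessdot P$ as in the discussion preceding Lemma~\ref{lem:trivialalso}, and the parameters $\kappa,\lambda,\zeta,P,R,S$ all belong to $N$ (for $\kappa,\lambda$ use $\kappa\le\lambda\le\theta$ and $\theta\cup\{\theta\}\subseteq N_i$). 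Granting the four order computations below, the ``in particular'' clauses become immediate: for~(\ref{item:eins}), Lemma~\ref{lem:trivialalso}(\ref{item:lcu}) gives $\LCU_R(P\cap N,\zeta\cap N)$ and Lemma~\ref{lem:blubb}(2) rewrites this as $\LCU_R\bigl(P\cap N,\cf(\zeta\cap N)\bigr)=\LCU_R(P\cap N,\zeta')$; for~(\ref{item:zero}), Lemma~\ref{lem:trivialalso}(1) gives $\COB_R(P\cap N,S\cap N)$, and if $(g_s)_{s\in S\cap N}$ witnesses it then $(g_{s_i})_{i<\lambda}$ witnesses $\COB_R(P\cap N,\lambda)$, where $(s_i)_{i<\lambda}$ is the increasing cofinal $\lambda$-sequence in $S\cap N$ built below (given a name $f$, pick $s\in S\cap N$ with $P\cap N\Vdash(\forall t\ge_S s)\,fRg_t$, then $i_0$ with $s_{i_0}\ge_S s$, and note $s_i\ge_S s$ for all $i\ge i_0$); and the ``same cofinality and completeness'' in~(\ref{item:drei}) is the routine fact that a cofinal subset of a directed order inherits both $\cf$ and $\comp$.

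The two ``static'' items are quick. For~(\ref{item:zwei}): given $A\subseteq S\cap N$ of size ${<}\min(\kappa,\comp(S))$, ${<}\kappa$-closure puts $A\in N$, the model $N$ correctly computes $|A|<\comp(S)$, and so by elementarity (with $S\in N$) $N$ has an upper bound of $A$, which lies in $S\cap N$. For~(\ref{item:drei}): since $\cf(S)\le\theta$ and $S,\theta\in N$, elementarity yields a cofinal $A\subseteq S$ with $A\in N$ and $|A|^{N}\le\theta$; an enumeration of $A$ of length ${\le}\theta$ inside $N$ together with $\theta\subseteq N$ forces $A\subseteq N$, so $A\subseteq S\cap N$; and ``$A$ is cofinal in $S$'' is absolute between $N$, $H(\chi)$ and $V$ (bounded quantifiers over $S\in H(\chi)$), hence $A$ is cofinal in $S$, a fortiori in $S\cap N$.

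The heart of the matter is the pair of cofinality computations~(\ref{item:eins}) and~(\ref{item:zero}), which run on the same engine: because $N_i\in N_{i+1}$, the model $N_{i+1}$ sees the set $\zeta\cap N_i$ (resp.\ $S\cap N_i$), computes its size as ${\le}\theta$, and---since $\theta<\zeta$ (resp.\ $\theta<\comp(S)$)---manufactures a bound for it inside $N_{i+1}$. In~(\ref{item:eins}): if $\zeta\le\theta$ then $\zeta\subseteq N$, so $\zeta\cap N=\zeta$ and $\cf(\zeta\cap N)=\zeta=\zeta'$; if $\zeta>\theta$, set $\gamma_i:=\sup(\zeta\cap N_i)$, which lies in $N_{i+1}$ and is ${<}\zeta$ (as $|N_i|=\theta<\zeta$ and $\zeta$ is regular), so $(\gamma_i)_{i<\lambda}$ is nondecreasing and cofinal in $\zeta\cap N$, giving $\cf(\zeta\cap N)\le\lambda$; conversely, any subset of $\zeta\cap N$ of size ${<}\lambda$ lies (by regularity of $\lambda$) in some $N_i$ and is bounded there by $\gamma_{i+1}\in\zeta\cap N$, giving $\cf(\zeta\cap N)\ge\lambda$. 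Item~(\ref{item:zero}) is the exact parallel: $\comp(S)>\theta$ lets $N_{i+1}$ supply $s_i\in S\cap N_{i+1}$ strictly above every element of $S\cap N_i$ (strictness available since $S$, hence $S\cap N_{i+1}$, has no maximum); then $(s_i)_{i<\lambda}$ is strictly increasing (as $s_j\in S\cap N_{j+1}\subseteq S\cap N_i$ for $j<i$) and cofinal in $S\cap N=\bigcup_i(S\cap N_i)$, so $\cof(S\cap N)\le\lambda$, while the reverse inequality again uses regularity of $\lambda$ and the absence of a maximum in $S\cap N$. The only thing demanding care---hardly an obstacle---is making the elementarity/absoluteness steps precise: that $|S\cap N_i|$ is genuinely computed as ${\le}\theta$ in $N_{i+1}$ (using $N_i\in H(\chi)$ and $\theta<\chi$), and that the cofinality statements pass through $H(\chi)$ as just noted.
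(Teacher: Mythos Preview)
Your proof is correct and follows essentially the same approach as the paper's: items~(\ref{item:zwei}) and~(\ref{item:drei}) are handled identically, and for~(\ref{item:zero}) both you and the paper produce the bounding sequence $s_i\in S\cap N_{i+1}$ above $S\cap N_i$ using $N_i\in N_{i+1}$ and $|N_i|=\theta<\comp(S)$. The only organizational difference is that the paper derives~(\ref{item:eins}) as the special case $S=\zeta$ of~(\ref{item:zero}), whereas you argue~(\ref{item:eins}) and~(\ref{item:zero}) in parallel with the same ``engine''; you are also more explicit than the paper about the lower bound $\cf(S\cap N)\ge\lambda$ (which the paper states without argument) and about the parenthetical claim that $N$ is ${<}\kappa$-closed.
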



\begin{proof}
      For (\ref{item:zwei}), the assumptions of Lemma~\ref{lem:trivialalso} are sufficient:
      Assume that $A\subseteq S\cap N$
      has size less than $\min(\kappa,\comp(S))$. As $N$ is ${<}\kappa$-closed, $A\in N$. By absoluteness, $N$ knows that
      the set $A$ (which is smaller than $\comp(S)$ 
      after all) has an upper bound, so there is an upper bound of $A$
      in $S\cap N$.

      (\ref{item:drei}) only requires that $\theta\cup\{\theta\}\subseteq N$ and $|N|=\theta$:
      In $N$, let $A\subseteq S$ be a cofinal subset of size $\cf(S)$. Since
      $\cf(S)\le \theta\subseteq N$, we have $A\subseteq N$,
      so $A\subseteq S\cap N$ is cofinal in $S$.
      And it is clear that any cofinal subset of a partial order has
      the same completeness and cofinality as the order itself.

    For (\ref{item:zero}),  fix $i<\lambda$. Since $|N_i|\leq\theta<\comp(S)$, there is some $\alpha_i\in S$ bounding $N_i\cap S$. In fact, we can find such $\alpha_i$ in $S\cap N_{i+1}$ because $N_i\in N_{i+1}$. Hence, $(\alpha_i)_{i<\lambda}$ is a cofinal increasing sequence of $S\cap N$, so $\cf(S\cap N)=\lambda$.
    The claim on $\COB$ follows from Lemmas~\ref{lem:trivialalso}(1) and~\ref{lem:blubb}(1).

    For (\ref{item:eins}), if $\zeta>\theta$ then, by (\ref{item:zero}) applied to $S=\zeta$, $\cof(\zeta\cap N)=\lambda$; if $\zeta\leq\theta$ then $\zeta\cap N=\zeta$, so $\zeta'=\zeta$.
    The claim on $\LCU$ follows from Lemmas~\ref{lem:trivialalso}(2) and~\ref{lem:blubb}(2).
\end{proof}

\section{The forcing for the left hand side}\label{sec:prep}

We set $(\bfrak_i,\dfrak_i)$ to be the following
pairs of dual characteristics in Cicho\'n's diagram:
\begin{equation}\label{eq:ordereofchars}
(\bfrak_i,\dfrak_i)=
\left\{
\begin{array}{c@{}c@{}c@{}c@{}c@{}c@{}c}
(&\addN&,&\cofN&)\text{\ \ for }&i=1,\\
(&\covN&,&\nonN&)\text{\ \ for }&i=2,\\
(&\bfrak&,&\dfrak&)\text{\ \ for }&i=3,\\
(&\nonM&,&\ \covM&)\text{\ \ for }&i=4.
\end{array}
\right.    
\end{equation}

We will use for each $i$ two Borel relations\footnote{Actually, 
in most cases we will use the same
$R^\LCU_i$ and $R^\COB_i$, which 
is moreover the ``canonical'' choice
for $(\bfrak_i,\dfrak_i)$. See the explanation that follows Theorem~\ref{thm:left}.} on $\omega^\omega$,
$R^\LCU_i$ and $R^\COB_i$, in such a way that ZFC proves
\begin{equation}\label{eq:COBLCU}
\bfrak_{R_i^\COB}\le\bfrak_i\le\bfrak_{R_i^\LCU}
\text{ and }
\dfrak_{R_i^\COB}\ge\dfrak_i\ge\dfrak_{R_i^\LCU}.
\end{equation}

We write $\LCU_i$ instead of $\LCU_{R^\LCU_i}$
and $\COB_i$ instead of $\COB_{R^\COB_i}$.

It is useful to have relations satisfying~\eqref{eq:COBLCU}, because in this way we get:
\begin{cor}\label{cor:alsotrivial}
$\LCU_i(P,\kappa)$ for $\kappa$ regular implies $P\Vdash \bfrak_i\le |\kappa|\ \& \ \dfrak_i\ge\kappa$.

$\COB_i(P,S)$
for $\comp(S)=\kappa_1$ and $\cf(S)=\kappa_2$
implies $P\Vdash \bfrak_i\ge \kappa_1\ \& \ \dfrak_i\le|\kappa_2|$.
\end{cor}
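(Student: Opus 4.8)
The plan is to derive both halves of the corollary directly from Lemma~\ref{lem:blubb}, using the ZFC-provable inequalities~\eqref{eq:COBLCU} to transfer the conclusions from the auxiliary relations $R^\LCU_i$, $R^\COB_i$ to the rows $(\bfrak_i,\dfrak_i)$ of Cicho\'n's diagram. No new ideas are needed beyond Lemma~\ref{lem:blubb} and the choice of these relations.

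For the first clause, assume $\LCU_i(P,\kappa)=\LCU_{R^\LCU_i}(P,\kappa)$ with $\kappa$ regular. I would apply Lemma~\ref{lem:blubb}(2) with $L=\kappa$ (so $\cf(L)=\kappa$) and $R=R^\LCU_i$, which gives $P\Vdash\bfrak_{R^\LCU_i}\le|\kappa|\ \&\ \dfrak_{R^\LCU_i}\ge\kappa$. Since~\eqref{eq:COBLCU} provides the ZFC-theorems $\bfrak_i\le\bfrak_{R^\LCU_i}$ and $\dfrak_i\ge\dfrak_{R^\LCU_i}$, these hold in every $P$-generic extension, and combining them with the previous line yields $P\Vdash\bfrak_i\le|\kappa|\ \&\ \dfrak_i\ge\kappa$.

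The second clause is symmetric. Assume $\COB_i(P,S)=\COB_{R^\COB_i}(P,S)$ with $\comp(S)=\kappa_1$ and $\cf(S)=\kappa_2$. Then $S$ is ${<}\kappa_1$-directed (this is what $\comp(S)=\kappa_1$ means), so I would fix a cofinal $A\subseteq S$ of size $\kappa_2$ and apply Lemma~\ref{lem:blubb}(1) with $\lambda=\kappa_1$ and $R=R^\COB_i$, obtaining $P\Vdash\bfrak_{R^\COB_i}\ge\kappa_1\ \&\ \dfrak_{R^\COB_i}\le|\kappa_2|$; the remaining two inequalities of~\eqref{eq:COBLCU}, namely $\bfrak_{R^\COB_i}\le\bfrak_i$ and $\dfrak_{R^\COB_i}\ge\dfrak_i$, again valid in the extension, then give $P\Vdash\bfrak_i\ge\kappa_1\ \&\ \dfrak_i\le|\kappa_2|$. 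I do not expect any real obstacle in this argument; the only point demanding a little care is that~\eqref{eq:COBLCU} is applied inside $V[G]$ rather than in the ground model, which is legitimate precisely because those inequalities are provable in ZFC and $R^\LCU_i$, $R^\COB_i$ are absolutely enough defined that $\bfrak_{R^\LCU_i}$, $\dfrak_{R^\COB_i}$, etc., denote the same characteristics when computed in $V[G]$.
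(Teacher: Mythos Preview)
Your proof is correct and is exactly the approach intended by the paper: the corollary is stated immediately after~\eqref{eq:COBLCU} with the sentence ``It is useful to have relations satisfying~\eqref{eq:COBLCU}, because in this way we get:'', so no separate proof is written out, but the derivation you give---apply Lemma~\ref{lem:blubb} to the auxiliary relations and then transfer via the ZFC inequalities~\eqref{eq:COBLCU}---is precisely what is meant.
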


\begin{theorem}\label{thm:left}
Assume 
GCH and fix regular cardinals $\aleph_1<\mu_1<\mu_2<\mu_3<\mu_4<\mu_\infty$
such that each
$\mu_n$ is the successor of a regular cardinal.

We can choose $R^\LCU_i,R^\COB_i$ satisfying~\eqref{eq:COBLCU} and construct
a ccc poset $P$ such that the following holds for $i=1,2,3,4$:
\begin{itemize}
    \item[(a)] If $i<4$ then, for all regular $\kappa$ such that $\mu_i\le \kappa\le\mu_\infty$, $\LCU_i(P,\kappa)$ holds. In the case $i=4$, $\LCU_i(P,\mu_4)$ and $\LCU_i(P,\mu_\infty)$ hold.
    \item[(b)] There is a directed order $S_i$ with $\comp(S_i)=\mu_i$ and $\cof(S_i)=\mu_\infty$ such that
    $\COB_i(P,S_i)$ holds.
\end{itemize}
Accordingly, $P$ forces
\[
\addN=\mu_1<\covN=\mu_2<\bfrak=\mu_3<\nonM=\mu_4<\covM=\mu_\infty=\cfrak.
\]
\end{theorem}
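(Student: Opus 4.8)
The plan is to construct $P$ as a finite support ccc iteration over a ground model satisfying GCH, alternating (in a carefully chosen bookkeeping pattern) between the forcings that naturally witness each pair $(\bfrak_i,\dfrak_i)$: amoeba-type forcing for $\add(\Null)$, random real forcing (in appropriate products / with large index sets) for $\cov(\Null)$, Hechler forcing for $\bfrak$, and the eventually-different-real forcing for $\nonM$. The length of the iteration will be $\mu_\infty$, and the key point is to interleave, for each $i$, a cofinal (in $\mu_\infty$) set of "big" blocks of iterands of the $i$-th type, while at stage $\mu_i$ and above we are already "saturated" enough that $\LCU_i(P,\kappa)$ holds for every regular $\kappa$ with $\mu_i\le\kappa\le\mu_\infty$. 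Concretely I would set up, for $i<4$, the generic objects $(c_\alpha)_{\alpha<\mu_\infty}$ so that an unbounded segment of them is added cofinally; the hypothesis that each $\mu_n$ is a successor of a regular cardinal is used to arrange the combinatorics of the index sets $S_i$ so that $\comp(S_i)=\mu_i$ exactly.

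The heart of the argument is to verify the $\LCU$ and $\COB$ properties, which by Corollary~\ref{cor:alsotrivial} immediately give both inequalities for each characteristic and hence the displayed chain of equalities. For $\LCU_i(P,\kappa)$: the witnessing sequence $(c_\alpha)_{\alpha<\kappa}$ consists of the $i$-th type generic reals added along a suitable cofinal-in-$\kappa$ subset of the iteration; given a name $g$ for a real, by ccc and finite support $g$ depends on countably many coordinates, hence appears at some stage $\alpha_0<\kappa$ (using $\mathrm{cf}(\kappa)>\aleph_0$), and from then on each new generic of type $i$ is $R_i^{\LCU}$-unbounded over $g$ by genericity of the corresponding iterand — this is exactly the standard "the next Hechler/eventually different/random real escapes any fixed ground-model real" fact, packaged relative to the intermediate model. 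For $\COB_i(P,S_i)$: here $S_i$ is the directed set of finite (or ${<}\mu_i$-sized, depending on the characteristic) partial approximations indexing the generic objects, ordered by extension; $\comp(S_i)=\mu_i$ because it is ${<}\mu_i$-directed, and $\cof(S_i)=\mu_\infty$ because we used $\mu_\infty$-many blocks. The family $(g_s)_{s\in S_i}$ is obtained by "gluing" the generics in block $s$, and any name $f$ is caught by some $s$ again by a support/ccc reflection argument.

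The main obstacle — and the only genuinely delicate part — is the simultaneous bookkeeping: one must interleave four different types of iterands so that \emph{all four} pairs of properties hold at once, with the thresholds $\mu_1<\mu_2<\mu_3<\mu_4$ respected, i.e. so that for $i<j$ the $i$-th characteristic does not get inadvertently pushed up to $\mu_j$ by the $j$-type forcings, nor pulled down. This requires choosing the index sets $S_i$ and the schedule so that type-$j$ forcings (for $j\neq i$) are "$R_i$-harmless" — which is where one invokes the known preservation theorems for finite support ccc iterations (preservation of $\sqsubseteq$-unbounded families, of Lebesgue-positivity, of non-meagerness, etc.), together with the fact that the relevant forcings are good/${}^{\mathrm{something}}$-good in the sense of the Judah–Shelah / Brendle–Mejía machinery. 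Since the excerpt explicitly says we may just quote this construction from \cite{GKS} or \cite{diegoetal}, in the actual writeup I would not reproduce it but instead cite it, only indicating how the $\mu_n$-successor-of-regular hypothesis feeds the completeness computations for the $S_i$; everything else (the final chain of equalities) then follows formally from Corollary~\ref{cor:alsotrivial} applied to $i=1,2,3,4$, noting $\cof(S_i)=\mu_\infty$ gives $\dfrak_i\le\mu_\infty$ and $\LCU_i(P,\mu_{i+1})$ (with $\mu_5:=\mu_\infty$) gives $\bfrak_{i+1}\le\mu_{i+1}$, while the arrows of Cicho\'n's diagram and $\covM=\cfrak$ close the remaining gaps.
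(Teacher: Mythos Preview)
Your proposal is correct and aligns with the paper's approach: the paper's own ``proof'' of this theorem consists entirely of citing \cite{GKS} (with \cite{diegoetal} as an alternative) and pointing to the specific items there (Def.~1.2, Lem.~1.3, Def.~1.17, Thm.~1.35), exactly as you conclude you would do. Your additional informal sketch of the underlying finite-support ccc iteration and preservation machinery is in the right spirit, though a few details (the precise role of the successor-of-regular hypothesis, the exact form of the $S_i$) are left vague; since you correctly defer to the citation, this does not affect the argument.
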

This theorem is proved in~\cite{GKS}; we will not repeat the 
proof here but instead point out where to find the definitions and proofs in the cited papers (the \emph{italic} labels in the following paragraph refer to the cited paper):

\emph{Def.~1.2} defines relations called $R_i$ for $i=1,\dots,4$.
These $R_i$ are, apart from $i=2$, the ``canonical''
relations for $\bfrak_i,\dfrak_i$. 
They play the role of $R^\LCU_i$ and, apart from $i=2$,
also of $R^\COB_i$.
$R^\COB_2$ is implicitly defined in \emph{Def.~1.17} as the canonical relation: $x R^\COB_2 y$ iff $y$ is not in the Borel null set coded by $x$.
\emph{Lem.~1.3} corresponds to~\eqref{eq:COBLCU} in this work,
and \emph{Thm.~1.35} is our Theorem~\ref{thm:left}.

\begin{remark}\label{rem:BCM}
In~\cite[Thm.~5.3]{diegoetal} a different
construction is presented, which gives a stronger conclusion and requires the weaker assumption that $\aleph_1\leq\mu_1<\mu_2<\mu_3<\mu_4<\mu_\infty=\mu_\infty^{<\mu_3}$ are just regular cardinals.
If we use this paper, then  $R^\COB_i=R^\LCU_i=R_i$ for all $i$, see
~\cite[Exm.~2.16]{diegoetal} (where $R_i$ corresponds to item $(5-i)$).
\end{remark}



\section{Cicho\'n's maximum without large cardinals}\label{sec:new}

\medskip

\begin{theorem}\label{main}
Assume GCH and $(\mu_n)_{1\le n\le 9}$ is a weakly increasing sequence
of cardinals with $\mu_n$ regular for $n\leq8$ and $\mu_9^{\aleph_0}=\mu_9$. Then there is a ccc poset $\Pfin$
forcing that
\begin{gather*}
\aleph_1\leq\addN=\mu_1\leq\covN=\mu_2\leq\bfrak=\mu_3\leq\nonM=\mu_4\leq \\
\leq\covM=\mu_5\leq\dfrak=\mu_6\leq\nonN=\mu_7\leq\cofN=\mu_8\leq\cfrak=\mu_9.
\end{gather*}
\end{theorem}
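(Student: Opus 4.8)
The strategy is to start from the forcing $\Ppre := P$ provided by Theorem~\ref{thm:left} (equivalently Remark~\ref{rem:BCM}) for a suitable choice of five regular cardinals on the "left hand side", and then pass to a complete subforcing $\Pfin := P \cap N$ for a carefully chosen $\sigma$-closed elementary submodel $N$, using Lemma~\ref{lem:trivial} to read off the new values of the cardinal characteristics. Concretely: the left-hand forcing $P$ already separates $\addN, \covN, \bfrak, \nonM, \covM = \cfrak$ as $\mu_1^{\mathrm{pre}} < \mu_2^{\mathrm{pre}} < \mu_3^{\mathrm{pre}} < \mu_4^{\mathrm{pre}} < \mu_\infty^{\mathrm{pre}}$, and it comes with $\LCU_i(P,\kappa)$ for all regular $\kappa$ in $[\mu_i^{\mathrm{pre}},\mu_\infty^{\mathrm{pre}}]$ (for $i<4$) together with $\COB_i(P,S_i)$ witnessing the upper side. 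The duality $(\bfrak_R,\dfrak_R)=(\dfrak_{R^\perp},\bfrak_{R^\perp})$ means that the same $P$ also controls the four "right-hand" characteristics $\dfrak, \nonN, \cofN, \cfrak$ via the dual relations, so that in $V^P$ we have a configuration in which the right-hand values are all equal to $\mu_\infty^{\mathrm{pre}}$ (or determined by $\max/\min$ relations). The point of intersecting with $N$ is to push the right-hand values \emph{down} to $\mu_6,\mu_7,\mu_8$ while leaving the left-hand values essentially unchanged, exploiting the fact that $\COB_R$ and $\LCU_R$ are inherited by $P\cap N$ but with the completeness/cofinality of the index set $S$ or $L$ \emph{recomputed inside $N$} (Lemma~\ref{lem:trivialalso} and Lemma~\ref{lem:trivial}).

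The main engineering step is the choice of $N$. Following the pattern of Lemma~\ref{lem:trivial}, I would take $N = \bigcup_{i<\lambda} N_i$ an increasing chain of $\kappa$-closed elementary submodels of $H(\chi)$, each of a fixed size $\theta$ and containing $\theta$ as a subset, with $N_i \in N_{i+1}$, where the parameters $\kappa \le \lambda \le \theta$ are tuned so that: (i) $\kappa$ exceeds $\mu_4^{\mathrm{pre}}$ (or rather $\mu_5$), so that all the left-hand $\LCU$ and $\COB$ witnesses, whose index sets have completeness at most $\mu_4^{\mathrm{pre}}$, survive the intersection with their completeness and cofinality unchanged by Lemma~\ref{lem:trivial}(\ref{item:zwei}),(\ref{item:drei}); (ii) $\theta$ is chosen between the desired target values so that a right-hand index set $L$ (a linear order, or a directed $S$ with large completeness) of cofinality $\ge\mu_\infty^{\mathrm{pre}}$ in $V$ gets its cofinality collapsed to exactly $\lambda$ inside $N$, by Lemma~\ref{lem:trivial}(\ref{item:zero}) or (\ref{item:eins}). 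Since we need three distinct target values $\mu_6 \le \mu_7 \le \mu_8$ on the right (plus $\mu_9$ for $\cfrak$), and the single model $N$ only gives one value $\lambda$, I expect one actually needs to iterate the construction: pass to $P \cap N_1 \cap N_2 \cap \dots$ with several models at distinct scales, or equivalently use a single $N$ built as a product/chain arranged so that different index sets $L_j$ ($j$ ranging over the four pairs) acquire different recomputed cofinalities $\lambda_j$. The cleanest route is: choose the $\mu_n^{\mathrm{pre}}$ of Theorem~\ref{thm:left} large (essentially $\mu_\infty^{\mathrm{pre}}$ above $\mu_9$), then intersect with a single $N$ whose defining sequence has length $\lambda$ chosen so that $\cf$ of the various index sets lands on the prescribed $\mu_n$ — but since one model yields one $\lambda$, the honest approach is a finite iteration of such intersections, handling the values $\mu_9, \mu_8, \mu_7, \mu_6$ in turn from the top down, checking at each stage that the already-fixed lower values are not disturbed (which holds because each successive model is $\kappa$-closed for a $\kappa$ above all characteristics already pinned down from below).

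After the model(s) are fixed, the verification is routine bookkeeping: $\Pfin := P \cap N$ is ccc (being $\kappa$-cc and $N$ being $\kappa$-closed, hence $P\cap N \lessdot P$), and for each of the four dual pairs $i=1,\dots,4$ one applies Corollary~\ref{cor:alsotrivial} together with Lemma~\ref{lem:trivialalso} to $\COB_i(P,S_i)$ and $\LCU_i(P,L)$ — with $L = \kappa$ for the left values and $L$ the appropriate linear/directed order for the dual — to get matching lower and upper bounds $\mu_n \le \bfrak_i$-entry $\le \mu_n$ in $V^{\Pfin}$. The $\max\{\dfrak,\nonM\} = \cofM$ and $\min\{\bfrak,\covM\} = \addM$ ZFC-relations then force $\addM$ and $\cofM$ into place automatically, and $\aleph_1 \le \addN$ is trivial. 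The one genuine subtlety — the main obstacle — is ensuring that passing to $P\cap N$ does not accidentally \emph{collapse} one of the left-hand values or $\aleph_1$, i.e. that $\Pfin$ preserves all the cardinals $\mu_1,\dots,\mu_9$: this is exactly why $N$ must be $\kappa$-closed for $\kappa$ large enough that the "small" cardinals are untouched, and why the index sets on the left must have completeness $\ge \kappa$ so Lemma~\ref{lem:trivial}(\ref{item:zwei}) keeps their completeness intact rather than dropping it to $\kappa$. Getting the inequalities in~\eqref{eq:ordereofchars} and the five-cardinal input of Theorem~\ref{thm:left} to line up with the nine-cardinal output, so that every $\mu_n$ for $1\le n\le 9$ is realized and none is skipped or merged except as the provable equalities dictate, is the combinatorial heart of the argument.
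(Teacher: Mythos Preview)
Your overall strategy is correct and matches the paper's: start from $\Ppre$ given by Theorem~\ref{thm:left} and pass to $\Pfin=\Ppre\cap N^*$ for a suitable $\sigma$-closed elementary submodel, using Lemmas~\ref{lem:trivialalso} and~\ref{lem:trivial} to track how $\LCU$ and $\COB$ parameters change. However, there is a real gap in your implementation.

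The heart of the matter is how to arrange that the four values $\cof(S_i\cap N^*)$ land on \emph{four different} targets $\mu_5,\mu_6,\mu_7,\mu_8$. In your scheme you set $\mu_i^{\mathrm{pre}}=\mu_i$ and take $\kappa>\mu_4$, so $\comp(S_i)=\mu_i$ is small. But then neither Lemma~\ref{lem:trivial}(\ref{item:drei}) nor~(\ref{item:zero}) applies to $S_i$ (since $\comp(S_i)<\theta<\cof(S_i)$), and you have no handle on $\cof(S_i\cap N)$ beyond the trivial bound $|N|$. Your ``top-down'' iteration with four models does not fix this: at each stage all four $S_i$ are in the same regime, so you cannot separate their cofinalities. (Your remark that ``cofinality unchanged by Lemma~\ref{lem:trivial}(\ref{item:drei})'' is simply false here, since $\cof(S_i)=\mu_\infty^{\mathrm{pre}}>\theta$.) The paper's solution is to start with \emph{large} auxiliary left-hand values $\theta_6<\theta_4<\theta_2<\theta_0<\theta_\infty$, all above $\mu_9$, so that $\comp(S_i)=\theta_{8-2i}$ is large; it then intersects with \emph{nine} models $N_0,\dots,N_8$, where $N_n=\bigcup_{\alpha<\lambda_n}N_{n,\alpha}$ with $|N_{n,\alpha}|=\theta_n$ (decreasing in $n$) and lengths $\lambda_n$ arranged in the alternating order $\lambda_7\le\lambda_5\le\lambda_3\le\lambda_1\le\lambda_0\le\lambda_2\le\lambda_4\le\lambda_6$ (these $\lambda$'s are the desired $\mu$'s). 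The $\LCU$ spectrum then transforms step by step, each $\theta_k$ becoming $\lambda_{k+1}$ at stage $k{+}1$, so that both $\lambda_{8-2i+1}$ and $\lambda_{8-2i}$ end up in the spectrum of $\Pfin$. For $\COB$, the key technical device you are missing is a product argument: writing $N_0\cap\dots\cap N_m=\bigcup_{\eta\in\prod_{n\le m}\lambda_n}N^\eta$ with each $N^\eta$ being ${<}\theta_m$-closed, one gets $\comp(S_i\cap N^\eta)\ge\theta_m>\theta_{m+1}$, so Lemma~\ref{lem:trivial}(\ref{item:zero}) \emph{does} apply at stage $m{+}1$ and yields $\cof(S_i\cap N^*)\le\lambda_m$. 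Finally, your identification of cardinal preservation as ``the one genuine subtlety'' is misplaced: $\Pfin$ is ccc (as $N^*$ is $\sigma$-closed), so no cardinals are collapsed; the real work is the bookkeeping just described.
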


Full GCH is not actually required, see Remark~\ref{remGCH}.

Note that the $\mu_n$ are required to be only weakly increasing, i.e., we can replace each $\le$ in the inequality of characteristics
 by either $<$ or $=$ at will. So we get 
the consistency of $2^9$ many different ``sub-constellations'' in Cicho\'n's diagram. 
Of course several of these 
have been known to be consistent before (even without large cardinals). E.g., 
the sub-constellation where we always choose ``$=$'' is just CH.

\begin{proof}

We fix an increasing sequence of cardinals (see Figure~\ref{fig:setup2})
\begin{equation}\label{eq:lambalambalamba}
\begin{gathered}
\aleph_1\leq\lambda_7\leq\lambda_5\leq\lambda_3\leq\lambda_1\leq\lambda_0\leq\lambda_2\leq\lambda_4\leq\lambda_6\leq \lambda_\infty<\\
<\theta_7<\theta_6<\theta_5<\theta_4<\theta_3<\theta_2<\theta_1<\theta_0<\theta_\infty,
\end{gathered}    
\end{equation}
such that the
following holds:
\begin{enumerate}
    \item All cardinals are regular, with the possible exception of $\lambda_\infty$,
    \item $\lambda_\infty=\lambda_\infty^{\aleph_0}$.
    \item\label{item:blabla}
    GCH, plus 
    $\theta_{n}$ is the successor of a regular cardinal for
    $n=6,4,2,0,\infty$.  %
    
    I.e., the assumptions for
    Theorem~\ref{thm:left} are satisfied if we set
    \begin{equation}\label{eq:mutotheta}
        \mu_i:=\theta_{8-2i}\text{ for $i=1,2,3,4$, and }\mu_\infty:=\theta_\infty.
    \end{equation}
\end{enumerate}

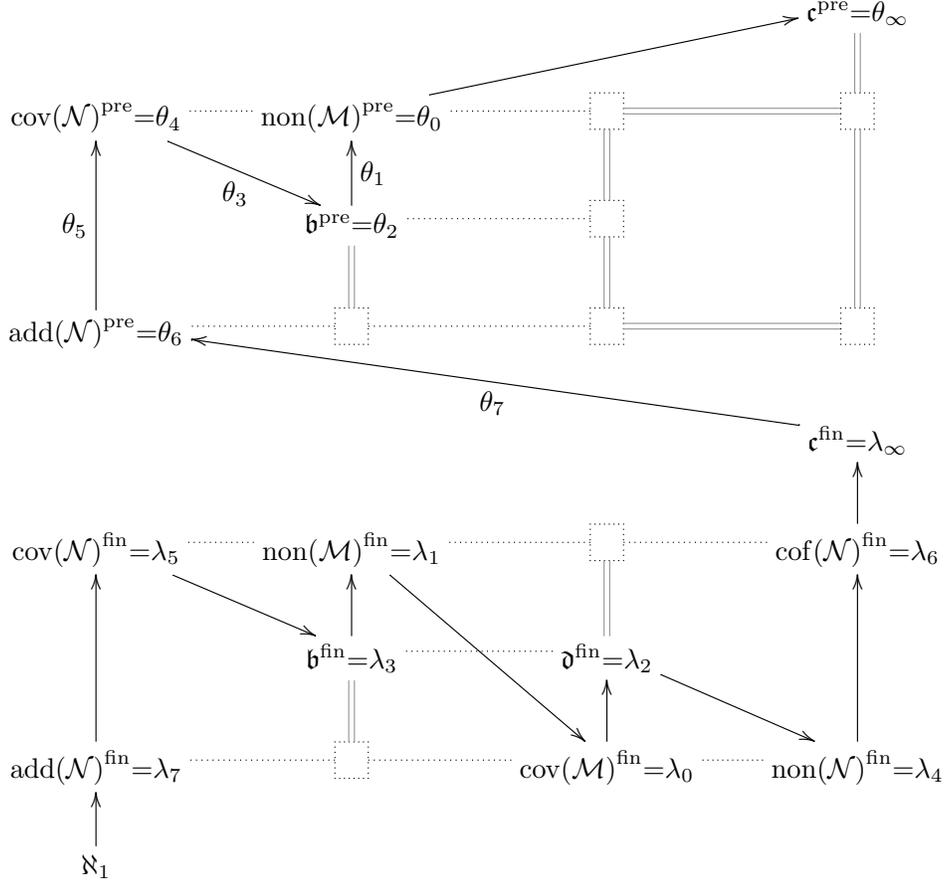
\begin{figure}
\resizebox{\textwidth}{!}{$
\xymatrix@=5ex{
&
&
&\txt{$\cfraki{=}\theta_\infty$}
\\
\txt{$\covNi{=}\theta_4$}
\ar@{.}[r]\ar[dr]_-{\textstyle\theta_3}
&\txt{$\nonMi{=}\theta_0 $}
\ar@{.}[r]\ar[rru]
&\mye
\ar@{=}@[Gray][r]
&\mye
\ar@{=}@[Gray][u]
\\
&\txt{$\bfraki{=}\theta_2$}
\ar@{.}[r]\ar[u]_-{\textstyle\theta_1}
&\mye
\ar@{=}@[Gray][u]
\\
\txt{$\addNi{=}\theta_6$}
\ar@{.}[r]\ar[uu]^-{\textstyle\theta_5}
&\mye
\ar@{.}[r]\ar@{=}@[Gray][u]
&\mye
\ar@{=}@[Gray][r]\ar@{=}@[Gray][u]
&\mye
\ar@{=}@[Gray][uu]
\\
&
&
&\txt{$\cfrakf{=}\lambda_\infty$}
\ar[ulll]^-{\textstyle\theta_7}
\\
\txt{$\covNf{=}\lambda_5$}
\ar@{.}[r]\ar[rd]
&\txt{$\nonMf{=}\lambda_1$}
\ar@{.}[r]\ar[ddr]
&\mye
\ar@{.}[r]
&\txt{$\cofNf{=}\lambda_6$}
\ar[u]
\\
&\txt{$\bfrakf{=}\lambda_3$}
\ar@{.}[r]\ar[u]
&\txt{$\dfrakf{=}\lambda_2$}
\ar@{=}@[Gray][u]\ar[dr]
\\
\txt{$\addNf{=}\lambda_7$}
\ar@{.}[r]\ar[uu]
&\mye\ar@{.}[r]
\ar@{=}@[Gray][u]
&\txt{$\covMf{=}\lambda_0$}
\ar@{.}[r]\ar[u]
&\txt{$\nonNf{=}\lambda_4$}
\ar[uu]
\\
\aleph_1
\ar[u]
}
$}
\caption{\label{fig:setup2}Our setup. The cardinals $\lambda_n$ and $\theta_n$ are increasing along the arrows (strictly increasing above $\lambda_\infty$).
The preparatory forcing $\Ppre$ forces 
$\mathfrak x=\innitialmark{\mathfrak x}$ for each left hand side characteristic $\mathfrak x$ (and forces the whole right side to be $\theta_\infty$); while
the final forcing $\Pfin$ forces 
$\mathfrak x=\finalmark{\mathfrak x}$ for every
characteristic $\mathfrak x$ (on either side).
I.e., the upper Cicho\'n's diagram shows the situation forced by $\Ppre$,
and the lower diagram shows the one forced by $\Pfin$.}
\end{figure}

So we can apply Theorem~\ref{thm:left}, resulting in
the forcing
$\Ppre$.
(Thus $\Ppre$ forces the situation shown in the upper Cicho\'n diagram
of Figure~\ref{fig:setup2}.)

We will now construct a forcing $\Pfin=\Ppre\cap N^*$ (a complete subforcing
of $\Ppre$) which forces
$(\bfrak_i,\dfrak_i)=(\lambda_{8-2i+1},\lambda_{8-2i})$ for all $i=1,\dots,4$, and $\cfrak=\lambda_\infty$ (i.e., the situation shown in the lower Cicho\'n diagram
of Figure~\ref{fig:setup2}).

We fix $N_{n,\alpha}$ for $0\le n\le 7,\ \alpha\in\lambda_n$,
as well as $N_8$,
satisfying the following for any $n\le 7$:
\begin{itemize}
    \item Each $N_{n,\alpha}$ as well as $N_8$ is an elementary submodel of $H(\chi)$ and contains (as elements) the sequences
    of $\theta$'s and $\lambda$'s, as well as $\Ppre$ and $S_i$ (the directed orders provided by Theorem~\ref{thm:left}) for $i=1,2,3,4$.
    \item $N_{n,\alpha}$ contains
        $(N_{m,\beta})_{m<n,\, \beta\in\lambda_m}$ as well as
        $(N_{n,\beta})_{\beta<\alpha}$.
        
        $N_8$ contains  
        $(N_{m,\beta})_{m\le 7,\, \beta\in\lambda_m}$.
    \item $|N_{n,\alpha}|=\theta_n$, and $N_{n,\alpha}$ is
        ${<}\theta_n$-closed (thus $\theta_n\subseteq N_{n,\alpha}$).\footnote{For $n\leq6$, ${<}\theta_{n+1}^+$-closed is enough; for $n=7$, ${<}\lambda_7$-closed is sufficient.} 
    \item We set $N_{n}:=\bigcup_{\alpha\in \lambda_n} N_{n,\alpha}$.    Note that $N_{n}$ is ${<}\lambda_n$-closed
       and has size $\theta_n$.
    \item $N_8$ is ${<}\aleph_1$-closed and has
        size $\lambda_\infty$.
    \item
        We set $N^*:=N_0\cap\dots\cap N_7\cap N_8$.
    \item For $0\le m\le 8$, we set
    $P_m:= \Ppre\cap N_{0}\cap\dots\cap N_{m}$ and
    $\Pfin:=P_{8}=\Ppre\cap N^*$.
    
    Note that $N_0\cap\dots\cap N_m$ is again an
    elementary submodel of $H(\chi)$,\footnote{If $M,N\preccurlyeq H(\chi)$ and $M\in N$ then $M\cap N\preccurlyeq M$ and $M\cap N\preccurlyeq N$.} and accordingly each
    $P_m$ is a complete subforcing of $\Ppre$.
\end{itemize}


\textbf{Regarding $\LCU$:}
We fix $i\in \{1,2,3,4\}$ (the case $i=3$, as an example, is described more explicitly below).
Let us call the set of regular cardinals $\kappa$
    satisfying $\LCU_i(P,\kappa)$ the ``$\LCU_i$-spectrum of $P$'', and let
    $\innitialmark{X}_i$ be the $\LCU_i$-spectrum of $\Ppre$. So
    \[
    \{ \theta_\infty,\theta_0,\dots,\theta_{8-2i} \} \subseteq \innitialmark{X}_i
    \]

\begin{itemize}
\item
    In the first step $n=0$, let us consider the $\LCU_i$-spectrum $X^0_i$
    of $P_0$: As $\theta_\infty\in \innitialmark{X}_i$, we get $\lambda_0\in X^0_i$,
    and as $\theta_0,\dots,\theta_{8-2i}$ are in $\innitialmark{X}_i$, they are in $X^0_i$
    as well (both according to Lemma~\ref{lem:trivial}(\ref{item:eins}), using
    $\kappa=\lambda_i$).
\item
    For the next step $n=1$, we similarly get
    that the $\LCU_i$-spectrum $X^1_i$ of $P_1$ contains
    $\lambda_0,\lambda_1$, and, if $i\ne 4$,
    also $\theta_1,\dots, \theta_{8-2i}$.
\item
    In this way we get that the final $\LCU_i$-spectrum $\finalmark{X}_i$
    of $\Pfin$ contains
    $\lambda_0,\dots,\lambda_{8-2i+1}$.
\item
    This implies (by Corollary~\ref{cor:alsotrivial})
    that $\Pfin$ forces
    \[
    \bfrak_i\le \min(\lambda_0,\dots,\lambda_{8-2i+1})=\lambda_{8-2i+1}
    \text{ and }
    \dfrak_i\ge \max(\lambda_0,\dots,\lambda_{8-2i+1}) = \lambda_{8-2i}.
    \]
    So we get half of the desired inequalities.
\end{itemize}
    This may be more transparent if we consider an explicit example, say $i=3$.  In each line of the following table, 
    each cardinal in the right column is guaranteed to be an element of the $\LCU_3$ spectrum of
    the forcing notion in the left column:
    \[
    \begin{array}{cclrcccc}
         \Ppre&&&&
        \theta_\infty,&\theta_0,&\theta_1,&\theta_2  \\
        P_0&=&\Ppre \cap N_0&&
        \lambda_0, &\theta_0, &\theta_1,&\theta_2 \\
        P_1&=&\Ppre \cap N_0\cap N_1 &&
        \lambda_0,& \lambda_1,& \theta_1,&\theta_2 \\
        P_2&=&\Ppre \cap N_0\cap N_1\cap N_2 &&
        \lambda_0,& \lambda_1,& \lambda_2,&\theta_2 \\
        P_3&=&\Ppre \cap N_0\cap N_1\cap N_2 &&
        \lambda_0,& \lambda_1,& \lambda_2,&\lambda_3 \\
        \vdots&&&
        \multicolumn{4}{c}{\vdots}\\
        \Pfin &&&&
        \lambda_0,& \lambda_1,& \lambda_2,&\lambda_3 \\
    \end{array}
    \]
    Since $\lambda_3$ is the smallest of these 4 cardinals, and $\lambda_2$ the largest, we get 
    that $\Pfin$ forces
    $
    \bfrak_i\le \lambda_3$ and 
    $\dfrak_i\ge  \lambda_2$. 


\textbf{Regarding $\COB$:}
Again we fix $i\in \{1,2,3,4\}$. Let  $m:=8-2i$. 
In particular, $0\le m\le 6$, $m$ is even,
so according to~\eqref{eq:lambalambalamba} we have $\lambda_{m+1}\leq\lambda_m$ and 
\begin{equation}\label{eq:blabla}
  \lambda_m=\max_{0\le n\le m}(\lambda_n)=\max_{0\le n\le m+1}(\lambda_n).
\end{equation}

Recall that $\COB_i(\Ppre, S_i)$ holds where $\comp(S_i)=\theta_{m}$ and $\cof(S_i)=\theta_\infty$ (cf.~Theorem~\ref{thm:left} and~\eqref{eq:mutotheta}).

We claim that
    \[T:=S_i\cap N_0\cap \dots\cap N_{m+1}\] satisfies
    \[
    \comp(T)\ge \min_{0\le n\le  m+1}(\lambda_n)=\lambda_{m+1}
    \quad\text{and}\quad
    \cf(T)\le \max_{0\le n\le  m+1}(\lambda_n)=\lambda_m.
    \]

    Completeness is clear by applying Lemma~\ref{lem:trivial}(\ref{item:zwei})
    iteratively: $\comp(S_i)>\lambda_0$, so $\comp(S_i\cap N_0)\ge \lambda_0$. Then $\comp(S_i\cap N_0\cap N_1)\ge\min\{\lambda_0,\lambda_1\}$, and so on.
    
Regarding the cofinality: 
\begin{itemize}
    \item 
        Let $\Lambda$ be the product $\prod_{n=0}^{m}\lambda_n$. 
        So $|\Lambda|=\lambda_m$ by~\eqref{eq:blabla}.

        For $\eta\in\Lambda$, set 
        $
            N^\eta:=\bigcap_{n=0}^m N_{n,\eta(n)}
        $. 
        Note that
        $
        N_{0}\cap\dots\cap N_{m}=\bigcup_{\eta\in\Lambda} N^\eta
        $, 
        and that $\Lambda$ is an element, and 
        thus a subset,
        of each elementary submodel.\footnote{Element is clear, as all
        $N$'s contain the sequence of $\lambda$'s.
        Subset follows from the fact that each $N$ 
        contains $\lambda_\infty$ and thus $\lambda_m$ as a subset, and 
        that $|\Lambda|=\lambda_m$.}
    
    \item For $\eta\in \Lambda$, set $T_\eta:= S_i\cap N^\eta$. Since $N_\eta$ is ${<}\theta_m$-closed and $\comp(S_i)\geq\theta_m$, we get $\comp(T_\eta)\geq\theta_m>\theta_{m+1}$. Hence, by Lemma~\ref{lem:trivial}(\ref{item:zero}) applied to $N=N_{m+1}$, $S=T_\eta$, $\kappa=\lambda=\lambda_{m+1}$ and $\theta=\theta_{m+1}$, we conclude $\cof(T_\eta\cap N_{m+1})=\lambda_{m+1}$. 
    
    Choose $C_\eta\subseteq T_\eta$ cofinal in $T_\eta\cap N_{m+1}$ of size $\lambda_{m+1}$. Hence, $C:=\bigcup_{\eta\in\Lambda}C_\eta$ is cofinal in $T$ because $T=\bigcup_{\eta\in\Lambda}T_\eta\cap N_{m+1}$, so $\cof(T)\leq|C|\leq|\Lambda|\cdot\lambda_{m+1}=
    \lambda_m\cdot \lambda_{m+1}=\lambda_m$
    by~\eqref{eq:blabla}.



\end{itemize}

    Now we show, by induction on $n\geq m+1$, that $S_i\cap N_0\cap\cdots\cap N_n$ has completeness ${\geq}\lambda_{m+1}$ and cofinality ${\leq}\lambda_m$. The step $n=m+1$ was done above; for the steps $n> m+1$, by induction we know that
    $S':=S_i\cap N_0\cap\dots\cap N_{n-1}$ has cofinality
    at most $\lambda_m$
    and completeness at least $\lambda_{m+1}$.
    So by Lemma~\ref{lem:trivial}(\ref{item:drei}),
    the same holds for $S'\cap N_{n}$.

    To summarize: For any $i=1,\dots,4$, the cofinality of
    $S_i\cap N^*$ is at most
    $\lambda_{8-2i}$, and the completeness at least
    $\lambda_{8-2i+1}$.
    By Lemmas~\ref{lem:trivialalso}(\ref{item:lcu}) and~\ref{cor:alsotrivial}(\ref{item:lcu}) we get
    \[\Pfin\Vdash \bfrak_i\ge\lambda_{8-2i+1}\ \& \ \dfrak_i\le\lambda_{8-2i}.\]
    So we get the remaining inequalities we need.


\textbf{Regarding the continuum:}
There is a sequence $(x_\xi)_{\xi<\theta_\infty}$ of (nice) $\Ppre$-names of reals that are forced to be pairwise different due to absoluteness~\eqref{eq:absolute}.
Note that this sequence belongs to $N^*$, so $(x_\xi)_{\xi\in\theta_\infty\cap N^*}$ is a sequence of $\Pfin$-names of reals that are forced (by $\Pfin$) to be pairwise different. Hence, $\Pfin$ forces $\cfrak\geq|\theta_\infty\cap N^*|=\lambda_\infty$.\footnote{This argument can be written in terms of the $\LCU$ property for the identity relation on $\omega^\omega$:
As $\LCU_{\mathrm{Id}}(\Ppre,\kappa)$ holds for all regular $\kappa\leq\lambda_\infty$ (even up to $\theta_\infty$), we get $\LCU_{\mathrm{Id}}(\Pfin,\kappa)$ for all these cardinals,
which implies
$\lambda_\infty\leq\cfrak$.} The converse inequality also holds because $|\Pfin|=\lambda_\infty=\lambda_\infty^{\aleph_0}$.%
\end{proof}

\begin{remark}\label{remGCH}
   If we base the left-hand forcing $\Ppre$
   on~\cite{diegoetal} (see Remark~\ref{rem:BCM}), 
   then our proof (when we change item~(\ref{item:blabla}) on p.~\pageref{item:blabla} to the assumptions
   listed in Remark~\ref{rem:BCM})
   shows 
   that GCH can be weakened to the following:
   There are at least 9 cardinals $\theta>\mu_9$ satisfying $\theta^{<\theta}=\theta$. Or, to be even be more pedantic: 
   There are regular cardinals $\theta_7<\ldots<\theta_0<\theta_\infty$ larger than $\mu_9$ such that $\theta_7^{<\mu_1}=\theta_7$, $\theta_\infty^{<\theta_2}=\theta_\infty$ and $\theta_i^{\theta_{i+1}}=\theta_i$ for $i\neq7,\infty$. 
\end{remark}

\section{Variants}\label{sec:more}

\subsection{Another order}\label{ss:other}
The paper~\cite{KeShTa:1131}  constructs (assuming GCH)  a ccc forcing notion $P$ which forces
another ordering of the left hand side.
More concretely, $P$ is ccc and it has $\LCU$
and $\COB$ witnesses for the following:\footnote{In~\eqref{eq:ordereofchars}, the order/numbering of $(\bfrak,\dfrak)$
and $(\covN,\nonN)$ is swapped; for this new ordering we again get Theorem~\ref{thm:left}. We use the same $R^\LCU$- and $R^\COB$-relations
 as in~\cite{GKS}, except for the $R^\LCU$-relation for the pair $(\covN,\nonN)$: Now we have to 
use a relation which is an $\omega_1$-union of Borel relations
(which was  originally defined in~\cite{KO} and fit into a formal
preservation framework in~\cite{CM19}; 
see details in~\cite[Def.~2.3]{KeShTa:1131}). 
This is the only place in this paper where we have to use 
a non-Borel relation $R$; but this is no problem as $R$ is sufficiently absolute
in the sense described after~\eqref{eq:absolute}.}
\[\addN<\bfrak<\covN<\nonM<\covM=\cfrak.\]

If we use this forcing $P$ instead of $\Ppre$, then the same
argument shows that we can
find a complete subforcing $\Pfin$ that extends the order to the right hand side:

\begin{theorem}\label{anothermain}
   Assume GCH and let $(\mu_n)_{1\le n\le 9}$ be a weakly  increasing sequence
of cardinals with $\mu_n$ regular for $n\leq8$ and $\mu_9^{\aleph_0}=\mu_9$. Then there is a ccc poset $\Pfin$
forcing that
\begin{gather*}
\aleph_1\leq\addN=\mu_1\leq\bfrak=\mu_2\leq\covN=\mu_3\leq\nonM=\mu_4\leq \\
\leq\covM=\mu_5\leq\nonN=\mu_6\leq\dfrak=\mu_7\leq\cofN=\mu_8\leq\cfrak=\mu_9.
\end{gather*}
\end{theorem}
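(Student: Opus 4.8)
The plan is to repeat the proof of Theorem~\ref{main} almost verbatim, with the forcing $P$ of~\cite{KeShTa:1131} in the role of the preparatory forcing $\Ppre$. First I would fix the pairs of dual characteristics as $(\bfrak_1,\dfrak_1)=(\addN,\cofN)$, $(\bfrak_2,\dfrak_2)=(\bfrak,\dfrak)$, $(\bfrak_3,\dfrak_3)=(\covN,\nonN)$ and $(\bfrak_4,\dfrak_4)=(\nonM,\covM)$, i.e.~\eqref{eq:ordereofchars} with the roles of $(\bfrak,\dfrak)$ and $(\covN,\nonN)$ interchanged, and choose $R^\LCU_i,R^\COB_i$ as in~\cite{GKS} except that $R^\LCU_3$ is now the $\omega_1$-union of Borel relations used in~\cite[Def.~2.3]{KeShTa:1131}. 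The one point that deserves a sentence here is that, although $R^\LCU_3$ is no longer Borel, it is still sufficiently absolute in the sense discussed after~\eqref{eq:absolute}; hence~\eqref{eq:absolute} holds for it and Lemmas~\ref{lem:blubb}, \ref{lem:trivialalso} and~\ref{lem:trivial} apply to it unchanged. With this choice $P$ satisfies the conclusion of Theorem~\ref{thm:left} for the new ordering: it is ccc, $\LCU_i(P,\kappa)$ holds for every regular $\kappa$ with $\mu_i\le\kappa\le\mu_\infty$ (and $\LCU_4(P,\mu_4)$, $\LCU_4(P,\mu_\infty)$ hold), and there are directed orders $S_i$ with $\comp(S_i)=\mu_i$, $\cof(S_i)=\mu_\infty$ and $\COB_i(P,S_i)$.

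Next I would fix, exactly as in~\eqref{eq:lambalambalamba}, an increasing sequence
\[
\aleph_1\le\lambda_7\le\lambda_5\le\lambda_3\le\lambda_1\le\lambda_0\le\lambda_2\le\lambda_4\le\lambda_6\le\lambda_\infty<\theta_7<\dots<\theta_0<\theta_\infty
\]
of cardinals, all regular except possibly $\lambda_\infty$, with $\lambda_\infty^{\aleph_0}=\lambda_\infty$, and with GCH together with ``$\theta_n$ is a successor of a regular cardinal'' for $n=6,4,2,0,\infty$, so that the hypotheses of Theorem~\ref{thm:left} (for the new ordering) hold with $\mu_i:=\theta_{8-2i}$ and $\mu_\infty:=\theta_\infty$; this produces the forcing $\Ppre:=P$. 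I would then build elementary submodels $N_{n,\alpha}$ of $H(\chi)$ for $0\le n\le 7$ and $\alpha\in\lambda_n$, together with $N_8$, having exactly the closure, size and containment properties used in the proof of Theorem~\ref{main}: each $N_{n,\alpha}$ contains the sequences of $\theta$'s and $\lambda$'s, $\Ppre$ and the $S_i$, is ${<}\theta_n$-closed of size $\theta_n$, and contains the earlier models; $N_8$ is ${<}\aleph_1$-closed of size $\lambda_\infty$ and contains all the $N_{n,\alpha}$. Setting $N_n:=\bigcup_{\alpha\in\lambda_n}N_{n,\alpha}$, $N^*:=N_0\cap\dots\cap N_7\cap N_8$, $P_m:=\Ppre\cap N_0\cap\dots\cap N_m$ and $\Pfin:=\Ppre\cap N^*$, each $N_0\cap\dots\cap N_m$ is again an elementary submodel of $H(\chi)$ and, since $\Ppre$ is ccc, each $P_m$ is a complete subforcing of $\Ppre$.

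The remainder is literally the argument of Theorem~\ref{main}. On the $\LCU$ side, tracking the $\LCU_i$-spectrum along $P_0,P_1,\dots,P_8=\Pfin$ via Lemma~\ref{lem:trivial}(\ref{item:eins}) (using $\kappa=\lambda_i$ at step $n$) shows that the $\LCU_i$-spectrum of $\Pfin$ contains $\lambda_0,\dots,\lambda_{8-2i+1}$, so by Corollary~\ref{cor:alsotrivial} $\Pfin\Vdash\bfrak_i\le\lambda_{8-2i+1}\ \&\ \dfrak_i\ge\lambda_{8-2i}$. On the $\COB$ side, writing $m:=8-2i$, the order $T:=S_i\cap N_0\cap\dots\cap N_{m+1}$ has $\comp(T)\ge\lambda_{m+1}$ by iterating Lemma~\ref{lem:trivial}(\ref{item:zwei}) and $\cof(T)\le\lambda_m$ by decomposing $N_0\cap\dots\cap N_m$ as a union of $\lambda_m$-many $N^\eta:=\bigcap_{n\le m}N_{n,\eta(n)}$ and applying Lemma~\ref{lem:trivial}(\ref{item:zero}) inside each $N^\eta$; Lemma~\ref{lem:trivial}(\ref{item:drei}) then propagates these bounds through $N_{m+2},\dots,N_7$ and $N_8$, so $S_i\cap N^*$ has completeness ${\ge}\lambda_{8-2i+1}$ and cofinality ${\le}\lambda_{8-2i}$, whence by Lemma~\ref{lem:trivialalso}(\ref{item:lcu}) and Corollary~\ref{cor:alsotrivial} $\Pfin\Vdash\bfrak_i\ge\lambda_{8-2i+1}\ \&\ \dfrak_i\le\lambda_{8-2i}$. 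Finally, a $\theta_\infty$-sequence of pairwise distinct nice $\Ppre$-names of reals (which exists since $\Ppre$ forces $\cfrak=\theta_\infty$) lies in $N^*$, so its restriction to $\theta_\infty\cap N^*$ is a sequence of pairwise distinct $\Pfin$-names and $\Pfin\Vdash\cfrak\ge|\theta_\infty\cap N^*|=\lambda_\infty$, while $|\Pfin|=\lambda_\infty=\lambda_\infty^{\aleph_0}$ gives the converse. Reading off $(\bfrak_i,\dfrak_i)=(\lambda_{8-2i+1},\lambda_{8-2i})$ for $i=1,\dots,4$ along the chain above yields $\addN=\lambda_7$, $\bfrak=\lambda_5$, $\covN=\lambda_3$, $\nonM=\lambda_1$, $\covM=\lambda_0$, $\nonN=\lambda_2$, $\dfrak=\lambda_4$, $\cofN=\lambda_6$ and $\cfrak=\lambda_\infty$, which is the asserted ordering; matching $\mu_1,\dots,\mu_9$ to $\lambda_7,\lambda_5,\lambda_3,\lambda_1,\lambda_0,\lambda_2,\lambda_4,\lambda_6,\lambda_\infty$ completes the proof. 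I expect no genuine obstacle beyond bookkeeping: the only new ingredient compared with Theorem~\ref{main} is the verification, already indicated above, that the non-Borel relation $R^\LCU_3$ is covered by the absoluteness~\eqref{eq:absolute}, so that everything in Section~\ref{sec:cob} goes through for it without change.
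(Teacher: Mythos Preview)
Your proposal is correct and follows exactly the approach the paper takes: replace $\Ppre$ by the forcing of~\cite{KeShTa:1131}, swap the indices of $(\bfrak,\dfrak)$ and $(\covN,\nonN)$ in~\eqref{eq:ordereofchars}, note that the non-Borel $R^\LCU$-relation for $(\covN,\nonN)$ is still sufficiently absolute for~\eqref{eq:absolute}, and then rerun the elementary-submodel argument of Theorem~\ref{main} verbatim. The paper's own proof is precisely this one-line reduction, and your write-up merely unpacks it in somewhat more detail.
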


\begin{remark}
As in Remark~\ref{remGCH}, full GCH is not needed, but it is enough that there are $9$ regular cardinals larger than $\mu_9$ satisfying some arithmetical properties. However, it is not enough that $\theta^{<\theta}=\theta$ for these $9$ cardinals, but it is required in addition that one of them is $\aleph_1$-inaccessible.\footnote{Recall that a cardinal $\theta$ is \emph{$\kappa$-inaccessible} if $\mu^\nu<\theta$ for every $\mu<\theta$ and $\nu<\kappa$.} For details, refer to~\cite{modKST,1166}.
\end{remark}

\subsection{A weaker notion than \texorpdfstring{$\COB$}{COB} sufficient for the proof}

Several papers about constellations of Cicho\'n's diagram preceding~\cite{GKS,diegoetal}, 
such as~\cite{Br,MR3047455,MR3513558},
have considered similar, but simpler, forcing constructions. While $\LCU$ witnesses are added in the same way, these do not provide for $\COB$. Instead, a weaker property, which we call $\DOM$ below, is implicit in these constructions. We now show that this notion is sufficient to carry out the proof of the main result.

\begin{definition}
   Let $R$ be a relation on $\omega^\omega$ and let $\kappa$ be a cardinal.
   \begin{enumerate}
       \item A set $A\subseteq\omega^\omega$ is \emph{$\kappa$-$R$-dominating} if, whenever $F\subseteq\omega^\omega$ has size ${<}\kappa$, there is some real $a\in A$ dominating over $F$, that is, $(\forall x\in F)xRa$. Dually, we say that $A$ is \emph{$\kappa$-$R$-unbounded} if it is $\kappa$-$R^\perp$-dominating.
       \item Assume that $R$ is sufficiently absolutely defined and let $P$ be a forcing notion. We define $\DOM_R(P,\kappa,S)$ to mean the following: There is a sequence $(f_\alpha)_{\alpha\in S}$ of $P$-names of reals such that, whenever $\gamma<\kappa$ and $(x_\xi)_{\xi<\gamma}$ is a sequence of $P$-names of reals, there is some $\alpha\in S$ such that $P\Vdash(\forall\xi<\gamma)x_\xi R f_\alpha$.
   \end{enumerate}
\end{definition}
(Note that $\DOM_R(P,\kappa,S)$ is stronger than just saying ``$P$ adds a $\kappa$-$R$-dominating family''.)

The following is straightforward:
\begin{itemize}
    \item $\COB_R(P,S)$ implies $\DOM_R(P,\comp(S),\cf(S))$.
    \item If $\kappa$ is regular then $\LCU_R(P,\kappa)$ implies $\DOM_{R^\perp}(P,\kappa,\kappa)$.
    \item $\DOM_R(P,\kappa,S)$ implies $P\Vdash\bigl(\,  \kappa\leq\bfrak_R\ \&\ \dfrak_R\leq|S|\, \bigr)$.
    
    (This generalizes Lemma~\ref{lem:blubb}.)
\end{itemize}

For this weaker notion we have the following result similar to Lemma~\ref{lem:trivial}.

\begin{lemma}\label{lem:dom}
      With the same hypothesis as in Lemma~\ref{lem:trivial}, assuming also $\nu\in N$:
      \begin{enumerate}
          \item $\DOM_R(P,\nu,S)$ implies $\DOM_R(P\cap N,\min\{\kappa,\nu\},S\cap N)$.
          \item If $|S|\leq\theta$ then $\DOM_R(P,\nu,S)$ implies $\DOM_R(P\cap N,\nu,S\cap N)$.
          \item If $\nu>\theta$ then $\DOM_R(P,\nu,S)$ implies $\COB_R(P\cap N,\lambda)$.
          
          In particular,
          if $S$ is directed and $\comp(S)>\theta$ then $\COB_R(P,S)$ implies $\COB_R(P\cap N,\lambda)$.
      \end{enumerate}
\end{lemma}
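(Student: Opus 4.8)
The plan is to follow the pattern of Lemmas~\ref{lem:trivialalso} and~\ref{lem:trivial}, the two key facts being: (i) the correspondence~\eqref{eq:absolute} between $P\cap N$-names of reals and $P$-names of reals lying in $N$, under which ``$P\Vdash x R y$'' is absolute; and (ii) every nice $P\cap N$-name for a real, being built from countably many antichains of $P$ of size ${<}\kappa$, already lies in $N$ (and hence, since $N=\bigcup_{i<\lambda}N_i$ is an increasing union, in some $N_i$). For part (1), use elementarity (as $P,R,S,\nu\in N$) to pick a witness $(f_\alpha)_{\alpha\in S}\in N$ for $\DOM_R(P,\nu,S)$, and claim $(f_\alpha)_{\alpha\in S\cap N}$ witnesses $\DOM_R(P\cap N,\min\{\kappa,\nu\},S\cap N)$. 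Given $\gamma<\min\{\kappa,\nu\}$ and a sequence $(x_\xi)_{\xi<\gamma}$ of $P\cap N$-names of reals, each (taken nice) $x_\xi$ lies in $N$, and since $\gamma<\kappa$ and $N$ is ${<}\kappa$-closed the whole sequence is in $N$; as $\gamma<\nu$, $N$ thinks there is $\alpha\in S$ with $P\Vdash(\forall\xi<\gamma)\,x_\xi R f_\alpha$, so by elementarity such an $\alpha$ can be found in $S\cap N$, and~\eqref{eq:absolute} (with the parameter $\gamma\in H(\kappa)$) transfers this to $P\cap N\Vdash(\forall\xi<\gamma)\,x_\xi R f_\alpha$.

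For part (2), note that $|S|\le\theta\subseteq N$ and $S\in N$ force $S\subseteq N$, i.e.\ $S\cap N=S$; this is exactly what lets us drop the cardinality restriction. Indeed, given $\gamma<\nu$ and $(x_\xi)_{\xi<\gamma}$ a sequence of $P\cap N$-names of reals (now $\gamma$ may exceed $\kappa$, so the sequence need not be in $N$), apply $\DOM_R(P,\nu,S)$ \emph{in $V$} directly to the $f_\alpha$'s to get $\alpha\in S=S\cap N$ with $P\Vdash(\forall\xi<\gamma)\,x_\xi R f_\alpha$; since $f_\alpha\in N$ (as $(f_\alpha)_{\alpha\in S}\in N$ and $\alpha\in N$) and since ``$P\Vdash(\forall\xi<\gamma)\,\varphi(\xi)$'' is equivalent to ``$P\Vdash\varphi(\xi)$ for every single $\xi<\gamma$'', we invoke~\eqref{eq:absolute} separately for each $\xi$. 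No elementarity step and no length-$\gamma$ sequence inside $N$ are needed.

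Part (3) is the main point and mirrors Lemma~\ref{lem:trivial}(\ref{item:zero}). Fix a witness $(f_\alpha)_{\alpha\in S}\in N_0$ for $\DOM_R(P,\nu,S)$, consisting (without loss of generality) of nice $P$-names. For each $i<\lambda$ let $\mathcal X_i$ be the set of nice $P$-names for reals that are elements of $N_i$; then $\mathcal X_i\in N_{i+1}$ (as $N_i\in N_{i+1}$ and ``nice $P$-name for a real'' is definable from $P\in N_{i+1}$), and $|\mathcal X_i|\le|N_i|=\theta<\nu$. Working inside $N_{i+1}$, which recognizes $(f_\alpha)_{\alpha\in S}$ as a $\DOM_R(P,\nu,S)$-witness (elementarity) and computes $|\mathcal X_i|<\nu$, we obtain $\alpha_i\in S\cap N_{i+1}\subseteq S\cap N$ with $P\Vdash(\forall x\in\mathcal X_i)\,x R f_{\alpha_i}$; set $g_i:=f_{\alpha_i}$, a $P$-name in $N$, hence (via~\eqref{eq:absolute}) a $P\cap N$-name. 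Then $(g_i)_{i<\lambda}$ witnesses $\COB_R(P\cap N,\lambda)$: any $P\cap N$-name $f$ of a real is equivalent to a nice $P\cap N$-name, which is a nice $P$-name lying in some $N_{i_0}$, so $f\in\mathcal X_j$ for all $j\ge i_0$, whence $P\Vdash f R g_j$ and, by~\eqref{eq:absolute}, $P\cap N\Vdash f R g_j$ for all $j\ge i_0$; since $P\cap N$ (being $\kappa$-cc with $\kappa\le\lambda$) preserves the regular cardinal $\lambda$, the pointwise statements combine to $P\cap N\Vdash(\forall j\ge_\lambda i_0)\,f R g_j$. Finally the ``in particular'' clause follows by combining the first bullet before the lemma, $\COB_R(P,S)\Rightarrow\DOM_R(P,\comp(S),\cf(S))$, with part (3), noting that $\comp(S),\cf(S)\in N$ since $S\in N$.

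The routine parts (1) and (2) are just variations on Lemma~\ref{lem:trivialalso}; the place that needs care is (3), where the bookkeeping around the niceness correspondence is essential: one must check that the $\mathcal X_i$ genuinely exhaust (up to equivalence) all $P\cap N$-names of reals, that $N_{i+1}$ can both define $\mathcal X_i$ and correctly relativize ``$(f_\alpha)_{\alpha\in S}$ is a $\DOM$-witness'', and that preservation of $\lambda$ by $P\cap N$ upgrades the pointwise forcings $P\cap N\Vdash f R g_j$ to a single $\forall$-statement. These are exactly the spots where the ${<}\kappa$-closure of $N$ (as opposed to the ${<}\theta$-closure of the $N_i$) and the strict inequality $\nu>\theta$ enter, so they should be spelled out even though none of them is difficult.
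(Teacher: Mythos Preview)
Your argument is correct and follows essentially the same route as the paper's proof: in (1) you use ${<}\kappa$-closure to put the short sequence into $N$ and then elementarity plus~\eqref{eq:absolute}; in (2) you use $S\subseteq N$ and apply $\DOM$ in $V$; in (3) you dominate all nice $P$-names in $N_i$ at once (there are ${\le}\theta<\nu$ of them) and locate the witness $\alpha_i$ inside $N_{i+1}$ by elementarity, exactly as in the paper. The one superfluous step is your appeal to preservation of $\lambda$: in the definition of $\COB_R(P\cap N,\lambda)$ the quantifier $(\forall j\ge i_0)$ ranges over the ground-model order $\lambda$, so ``$P\cap N\Vdash f R g_j$ for every $j\ge i_0$'' is already literally the required statement and no cardinality-preservation argument is needed.
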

\begin{proof}
    In the following, assume that $(f_\alpha)_{\alpha\in S}$ witnesses $\DOM_R(P,\nu,S)$.
    
     (1) If $(x_\xi)_{\xi<\gamma}$ is a sequence of $P$-names of reals and $\gamma<\min\{\kappa,\nu\}$ then the sequence is in $N$, so there is some $\alpha\in S\cap N$ such that $P\Vdash x_\xi R f_\alpha $ for all $\xi<\gamma$. By absoluteness, $P\cap N$ forces the same.
     
     (2) is clear because $S\subseteq N$ (as $|S|\leq\theta$ and $S\in N$).
     
     (3) Fix $i<\lambda$. Since $|N_i|\leq\theta<\nu$, there is some $\alpha_i\in S$ such that $P\Vdash xRf_{\alpha_i}$ for all $x\in N_i$ that are $P$-names for reals. In fact, we can find such $\alpha_i$ in $S\cap N_{i+1}$. Hence, $(f_{\alpha_i})_{i<\lambda}$ witnesses $\COB_R(P\cap N,\lambda)$.
\end{proof}

As in~\cite{modKST}, a simpler version of $\Ppre$ can be constructed in such a way that 
\begin{itemize}
    \item[(a)] of Theorem~\ref{thm:left} holds, and
    \item[(b')] For $i=1,2,3,4$ there is some set $S_i$ of size $\mu_\infty$ such that $\DOM_{R^\COB_i}(P,\mu_i,S_i)$ holds.
\end{itemize}
Thanks to Lemma~\ref{lem:dom} (in particular item (3)), the same proof of Theorem~\ref{main} can be carried out in this simpler context.


\section{Open questions}\label{sec:q}

\cite[Sect.~3]{GKS} asks the following questions: Can you show the consistency of
 Cicho\'n's maximum \dots
\begin{enumerate}
    \item[\checkmark(a)] \dots without using large cardinals?
    \item[\checkmark(b)] \dots for specific (regular) values, such as  $\mu_i=\aleph_{i+1}$?
    \item[$\thicksim$(c)] \dots for other orderings of the ten entries?
    \item[$\thicksim$(d)] \dots together with further distinct values of additional (''classical'') cardinal characteristics?
\end{enumerate}

This work, more concretely Theorem~\ref{main}, solves questions (a) and (b).

A first result for (d), namely adding $\aleph_1<\mfrak<\pfrak<\hfrak<\addN$,
is done 
in~\cite{1166} (which also gives a more complicated construction to achieve (b)).

\medskip

Of course, it would be interesting to add more characteristics. For example, we can ask:
\begin{question}
    Can we add
    the splitting number $\mathfrak{s}$ and the reaping number $\mathfrak{r}$?
\end{question}
The pair $(\mathfrak s,\mathfrak r)$ might be most promising among the classical characteristics, as it is of the form $(\bfrak_R, \dfrak_R)$ for  a Borel
relation $R$ which is well understood.

\medskip

Question (c) remains largely open.
There are four possible configurations where $\nonM<\covM$, 
and at the moment only $2$ are known to be consistent (see Theorems~\ref{main} and~\ref{anothermain}).


\begin{question}
    Are the following two constellations consistent?
    \[\aleph_1<\addN<\covN<\bfrak<\nonM<\covM<\nonN<\dfrak<\cofN<\cfrak,\]
    \[\aleph_1<\addN<\bfrak<\covN<\nonM<\covM<\dfrak<\nonN<\cofN<\cfrak.\]
\end{question}



It is not clear whether our method in Section~\ref{sec:new} can be applied to solve this question (the same applies to Boolean ultrapowers), since we start with a poset forcing an order of the left side of Cicho\'n's diagram and our method only manages to \emph{dualize} this order to the right side (e.g., if on the left we force $\covN<\bfrak$, then on the right we can only expect to force the dual inequality $\dfrak<\nonN$). 

The case when $\covM<\nonM$ seems to be more complex.\footnote{Recall that finite support iterations add Cohen reals at limit steps, so they force $\nonM\leq\covM$ (when the length has uncountable cofinality).} We
do not even know how to force the consistency of $\aleph_1<\covM<\nonM$.
J.~Brendle however does, see~\cite{BrShat} 
for slides of a presentation of his method of ``shattered iterations''. 
Brute force counting shows that there are $57$ configurations of ten different values in Cicho\'n's diagram (satisfying the obvious inequalities) where $\covM<\nonM$, but none of them have been proved to be consistent so far. 
\begin{question}
     Is any constellation
    of Cicho\'n's maximum consistent where 
    $\covM<\nonM$?
\end{question}

\bibliography{morebib}
\bibliographystyle{amsalpha}


\end{document}